\definecolor{darkgreen}{rgb}{0.06, 0.56, 0.2}
\newcommand{\E}{\mathbb{E}}
\newcommand{\R}{\mathbb{R}}
\newcommand{\Prob}{\mathbb{P}}
\newcommand{\Z}{\mathbb{Z}}
\def\cala{{\mathcal A}}
\def\calf{{\mathcal F}}
\DeclareMathOperator{\Var}{Var}
\newcommand{\pend}{\hfill \thicklines \framebox(6.6,6.6)[l]{}}
\newenvironment{proof*}[1]{\noindent {\sc  #1} \rm}{\pend}
\newtheorem{lemma}{Lemma}[section]
\newtheorem{assumption}{Assumption}[section]
\newtheorem{proposition}{Proposition}[section]
\newcommand{\setsection}[2] {
\setcounter{section}{#1}
\setcounter{subsection}{0}
\setcounter{equation}{0}
\setcounter{conjecture}{0}
\setcounter{assumption}{0}
\setcounter{question}{0}
\setcounter{definition}{0}
\setcounter{theorem}{0}
\setcounter{corollary}{0}
\setcounter{lemma}{0}
\setcounter{proposition}{0}
\setcounter{remark}{0}
\setcounter{appen}{0}
\setsection*{\large \bf \thesection. #2}}
\begin{document}
\title{\bf \Large  Asymptotic Product-form Steady-state for Multiclass Queueing Networks:
%\\
A Reentrant Line Case Study}

%%%%%%%%%%%%%%
%   AUTHORS  %
%%%%%%%%%%%%%%
\author{J.G. Dai\\Cornell University\\ \and Dongyan (Lucy) Huo\\Cornell University\\}
\date{\today \medskip\\ }
\normalsize 

%\date{}

\maketitle

\begin{abstract}
This paper serves as a companion to ``Asymptotic Product-form Steady-state for Multiclass Queueing Networks with SBP Service Policies in Multi-scale Heavy Traffic''~\citep{DaiHuo2024}. In this short paper, we illustrate the main results of~\citet{DaiHuo2024} through a two-station, five-class reentrant line under a specific static buffer priority policy, while avoiding heavy notations.
For this example, we prove the asymptotic steady-state limit and uniform moment bound under general inter-arrival and service time distributions.

\end{abstract}

 \begin{quotation}
\noindent {\bf Keywords}: multi-class queueing networks, product-form stationary distribution, heavy traffic approximation, performance analysis

\medskip

\noindent {\bf Mathematics Subject Classification}: 60K25, 60J27, 60K37
\end{quotation}

\section{Introduction}
\label{sec:intro}

In the work by~\cite{DaiHuo2024}, the authors investigate the stationary distribution of the scaled queue length vector process in multi-class queueing networks operating under static buffer priority (SBP) service policies. They show that, under multi-scale heavy traffic, the stationary distribution of the scaled queue lengths converges to a product-form limit, with each component following an exponential distribution. Moreover, they prove uniform moment bounds for appropriately scaled low-priority queue lengths, provided the unscaled high-priority queue lengths have uniform moment bounds and a certain reflection matrix is a $\mathcal{P}$-matrix. This moment bound is crucial for proving the product-form limit.

This companion paper focuses on a specific example: a two-station, five-class reentrant line operating under a given SBP policy. Specializing in this concrete setting, we prove the asymptotic product-form limit in \cite{DaiHuo2024}, and argue that the assumptions for the uniform moment bound are satisfied and subsequently establish the uniform moment bound. As such, we demonstrate the key technical contributions while deliberately avoiding complex notation.

\textbf{Paper Organization:}
We first set up the two-station five-class re-entrant line and the service policy SBP in Section~\ref{sec:2s5c}. We next state our assumptions and present our main results in Section~\ref{sec:main-2s5c}, and subsequently provide the proofs in Section~\ref{sec:proof-2s5c} and~\ref{sec:mom-proof-2s5c}. We conclude the paper in Section~\ref{sec:conclude}. For numerical experiments for this reentrant line example, we refer readers back to the full paper~\citep{DaiHuo2024}.

\section{Network and Service Discipline}
\label{sec:2s5c}

Figure~\ref{fig:2s5c-fig} depicts the two-station five-class re-entrant line that we are going to discuss.
\begin{figure}[htbp]
    \centering
    \begin{tikzpicture}%[scale=1]
    \draw (0, -0.5) rectangle (2, 2);
    \draw (4, -0.5) rectangle (6, 2);
    \draw[-] (-1, 1.5) -- (7,1.5);
    \draw[-] (-1, 0.75) -- (6.5,0.75);
    \draw[-] (7, 1.5) -- (7,-1.5);
    \draw[-] (6.5, 0.75) -- (6.5,-1);
    \draw[-] (6.5, -1) -- (-0.5,-1);
    \draw[-] (7, -1.5) -- (-1,-1.5);
    \draw[-] (-1, -1.5) -- (-1,0.75);
    \draw[-] (-0.5, -1) -- (-0.5,0);
    \draw[->] (-0.5, 0) -- (2.5,0);
    \draw[->] (-0.5, 0) -- (0,0);
    \draw[->] (-0.5, 1.5) -- (0,1.5);
    \draw[->] (-0.5, 0.75) -- (0,0.75);

     \draw[->] (3.5, 1.5) -- (4,1.5);
    \draw[->] (3.5, 0.75) -- (4,0.75);

    \draw (1, 1.7) node {$m_1$};
    \draw (1, 0.95) node {$m_3$};
    \draw (1, 0.2) node {$m_5$};

    \draw (5, 1.7) node {$m_2$};
    \draw (5, 0.95) node {$m_4$};

    \draw (1, 2.5) node {Station 1};
    \draw (5, 2.5) node {Station 2};
    
    \end{tikzpicture}
    \caption{Two-Station Five-Class Re-entrant Line}
    \label{fig:2s5c-fig}
\end{figure}
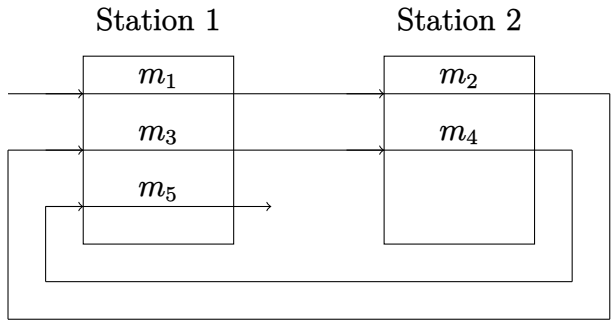
Each rectangle denotes a station with a single server, and each server processes jobs one at a time. Jobs arrive externally following a renewal process. The inter-arrival times $\{T_{e,1}(i)/\alpha_1, i\geq1\}$ are assumed to be independently identically distributed (i.i.d.) with mean $1/\alpha_1$ and $\E[T_{e,1}(1)]=1$.
Each job goes through five steps in the system before exiting, following the flow shown in the figure. Steps 1, 3, and 5 are at station 1, while steps 2 and 4 are at station 2. When a job finishes its processing in step $k(<5)$, it will move to buffer $k+1$ to wait for the start of its next step. We assume that each buffer has infinite capacity. 
Following~\cite{Harr1988}, we adopt the notion of job classes, and we denote a job to be of class $k$ if it is currently being processed in step $k$ or waiting in buffer $k$ for its turn. In the remainder of this paper, we will use the terms ``class'' and ``buffer'' interchangeably, as we consider that the job currently being processed in step $k$ is still in the buffer. For simplicity, we deviate from the convention in~\cite{DaiHuo2024} of reserving the index $j$ for the lowest-priority class at station $j$. Instead, we index the classes based on the traffic flow as shown in Figure~\ref{fig:2s5c-fig}. 
The processing times for each class are assumed to be i.i.d.\ with mean $m_k$, and we denote the sequence of processing times of class $k$ jobs as $\{m_kT_{s,k}(i),i\geq1\}$ with $\E[T_{s,k}(1)]=1$. We assume that the sequences
$\{T_{e,1}(i)/\alpha_1\}$ and $\{m_kT_{s,k}(i)\}$, $k\in\{1,\ldots,5\},$
are defined on the same probability space $(\Omega,\calf, \mathbb{P})$, and are mutually independent.
Furthermore, we place the following moment condition on $T_{e,1}$ and $T_{s,k}$ for $k=1,\ldots,5$.
\begin{assumption}
\label{assumption:2s5c-t-moment}
    There exists a small $\delta_0>0$ such that
\begin{equation*}
    \E[T_{e,1}^{3+\delta_0}]<\infty,\quad\text{and}\quad \E[T_{s,k}^{3+\delta_0}]<\infty,\quad k\in\{1,\ldots,5\}.
\end{equation*}
\end{assumption}

Subsequently, we denote
\begin{align}
    &c^2_{e,1}\equiv c^2(T_{e,1})=\Var(T_{e,1})/(\E[T_{e,1}])^2=\Var(T_{e,1}),\label{eq:ce-def}\\
    &c^2_{s,k}\equiv c^2(T_{s,k})=\Var(T_{s,k})/(\E[T_{s,k}])^2=\Var(T_{s,k}),\quad k=1,\ldots,5,\label{eq:cs-def}
\end{align}
where $c^2(U)$ denotes the squared coefficient of variation (SCV) of random variable $U$.

After completing the current job, the server must decide on the next job to work on, and this decision gives rise to a service policy. For this network, we assume the service discipline is non-idling and we study the following SBP service discipline,
\begin{equation}
\label{eq:2s5c-pri}
    \{(5,3,1),(2,4)\}.
\end{equation}
For jobs at station 1, we assign the highest priority to class $5$, the next priority to class $3$, and the lowest priority to class $1$. For jobs at station 2, we have class $2$ jobs of the highest priority and class $4$ jobs of the lowest. 
The server will always begin processing jobs from the non-empty class of the highest priority, and will only move on to jobs of lower priority classes, say class $1$, when no more jobs of strictly high priorities remain at the station, i.e., buffer $3$ and $5$ are empty.
Within each class, we assume that the jobs are served in a first-come first-serve manner. We additionally assume that the service discipline is preemptive-resume, that is, when a job of a higher priority than the one currently being served arrives at the station, the service of the current job will be interrupted. When all jobs of higher priorities at the station have been served, the previously interrupted job continues from where it has been left off.

\subsection{Markov Process and Stability}
Let $Z_k(t)$ denote the number of class $k$ jobs at time $t$, including possibly the one in service, for $k=1,\ldots,5$. Denote the queue length vector by
\begin{equation*}
    Z(t)=\Big(Z_1(t),Z_2(t),Z_3(t),Z_4(t),Z_5(t)\Big)',
\end{equation*}
where the superscript $'$ denotes the transpose operator.
When the inter-arrival and service times are all exponentially distributed, $\{Z(t),t\geq0\}$ then is a CTMC. However, when working with general inter-arrival and service times, $\{Z(t),t\geq0\}$ itself no longer forms a valid Markov process. To address this, we expand the state space to incorporate two auxiliary processes, $R_{e,1}(t)$, which denotes the remaining inter-arrival time of the next class $1$ job, and
\begin{equation*}
    R_s(t)=\Big(R_{s,1}(t),R_{s,2}(t),R_{s,3}(t),R_{s,4}(t),R_{s,5}(t)\Big)',
\end{equation*}
which denotes the remaining service time of the leading class $k$ job at time $t$, assuming that the server at the station devotes its entire service capacity to this job. If $Z_k(t)=0$, i.e., there is no class $k$ job in the buffer, $R_{s,k}(t)$ is then set to be the service time of the next class $k$ job in service.
Together,
\begin{equation*}
    X(t)=\big(Z'(t),R_{e,1}(t), R_s'(t)\big)',
\end{equation*}
is a valid Markov process on $\Z_+^5\times\R_+^6$. 
To lighten the notation, we omit the superscript $'$, and all vectors are defaulted to column vectors unless specified otherwise.

Let $\rho_j$, $j=1,2$ denote the traffic intensity at each station. The traffic intensities are computed as follows and are assumed to be less than 1. 
\begin{equation}
\label{eq:2s5c-rho}
    \rho_1=\alpha_1(m_1+m_3+m_5)<1,\quad\text{and}\quad
    \rho_2=\alpha_1(m_2+m_4)<1.
\end{equation}

We call a system ``stable,'' if the Markov process $\{X(t),t\geq0\}$ is positive Harris recurrent with a unique stationary distribution. Denote the random vector following the stationary distribution as
\begin{align}
    X&=(Z,R_{e,1},R_s),\label{eq:2s5c-state}\\
    \text{where } Z&=(Z_1,Z_2,Z_3,Z_4,Z_5)\quad\text{and}\quad
    R=(R_{s,1},R_{s,2},R_{s,3},R_{s,4},R_{s,5}).\nonumber
\end{align}
When the system is stable, we define the following quantities.
    \begin{align*}
    \beta_1&=\Prob(Z_1=0,Z_3=0, Z_5=0)=1-\alpha_1(m_1+m_3+m_5)=1-\rho_1,\quad \\
    \beta_3&=\Prob(Z_3=0, Z_5=0)=1-\alpha_1(m_3+m_5),\quad \beta_5=\Prob(Z_5=0)=1-\alpha_1m_5,\\
    \beta_4&=\Prob(Z_2=0, Z_4=0)=1-\alpha_1(m_2+m_4)=1-\rho_2,\quad \beta_2=\Prob(Z_2=0)=1-\alpha_1m_2,
\end{align*}
where $\beta_k$ denotes the excess capacity after serving all jobs at the station with priorities as high as $k$. Hence, $\rho_1,\rho_2<1$ is necessary for the two servers to remain not overloaded in the long run.

However, $\rho_1,\rho_2<1$ is not sufficient for the stability of this network operating under the specified SBP~\eqref{eq:2s5c-pri}, as discussed in \cite{DaiVand2000}. 
We need to additionally assume the following condition to ensure a stable system, that
\begin{equation}
\label{eq:2s5c-virtual-rho}
    \rho_v=\alpha_1(m_2+m_5)<1.
\end{equation}
When the system is stable, one may intuitively interpret the traffic intensity as the server utilization rate. 
The inequalities in~\eqref{eq:2s5c-rho} and \eqref{eq:2s5c-virtual-rho} together define the stability region for the specified SBP policy in \eqref{eq:2s5c-pri}.
For a general MCN, we would like to note that characterizing its stability region remains an open problem.

\section{Multi-scale Heavy-traffic and Product-form Limit}
\label{sec:main-2s5c}
To study the heavy-traffic asymptotic, we consider a sequence of the queueing networks indexed by $r\in(0,1)$, with $r$ tending towards $0$.
Under this set-up, the unitized inter-arrival and service times $T_{e,k}$ and $T_{s,k}$, routing, and the SBP policy specified in~\eqref{eq:2s5c-pri} are held constant throughout the sequence of networks indexed by $r$.
\begin{assumption}[Multi-scale heavy traffic]
\label{assumption:multi-scale-2s5c}
The external arrival rate $\alpha_1$ is assumed to be fixed and independent of $r$, and for simplicity, $\alpha_1=1$. Only the mean service time $m_k^{(r)}$ is assumed to vary and depend on $r$, 
\begin{equation*}
    m_k^{(r)}=\begin{cases}
        (1-r)m_k& k=1,3,5\\
        (1-r^2)m_k&k=2,4,
    \end{cases}
\end{equation*}
with $m_1+m_3+m_5=m_2+m_4=1$ and $m_2+m_5<1.$
\end{assumption}

Thus, under Assumption~\ref{assumption:multi-scale-2s5c}, for $r\in(0,1)$,
\begin{equation*}
    r^{-1}(1-\rho_1^{(r)})=1,\quad\text{and}\quad r^{-2}(1-\rho_2^{(r)})=1,
\end{equation*}
which implies that the traffic intensities at station 1 and 2 approach the critical load $1$ at segregated orders of $r$.
This segregation of the degree of $r$ corresponds to the rationale behind terming it the ``multi-scale heavy traffic condition.''

Under this setup, for $r\in(0,1)$, $m_2^{(r)}+m_5^{(r)}<1.$
Therefore, $\{X^{(r)}(t), t\geq0\}$ is positive Harris recurrent, and we denote $X^{(r)}=(Z^{(r)}, R_{e,1}, R_s^{(r)})$ as the state vector under the stationary distribution.

\begin{proposition}
\label{prop:2s5c-result}
Under Assumption~\ref{assumption:2s5c-t-moment} and~\ref{assumption:multi-scale-2s5c},
there exist a random vector $(Z_1^\ast, Z_4^\ast)\in\R_+^2$ such that
    \begin{equation*}
        (rZ_1^{(r)}, rZ_2^{(r)}, rZ_3^{(r)}, r^2Z_4^{(r)}, rZ_5^{(r)})\Rightarrow (Z_1^\ast, 0, 0, Z_4^\ast,0),\quad\text{as }r\to0,
    \end{equation*}
    where ``$\Rightarrow$'' denotes convergence in distribution.
    
   Moreover, $Z_1^\ast$ and $Z_4^\ast$ are independent and have the following marginal distributions
   \begin{equation*}
        Z_1^\ast\sim\exp\big(1/d_1\big),\quad\text{and}\quad Z_4^\ast\sim\exp\big(1/d_4\big),
    \end{equation*}
    with
    \begin{align}
        d_1&=\frac{\alpha_1}{2(m_1+m_3-m_5\frac{m_2}{m_4})}\Big((m_1+m_3-m_5\frac{m_2}{m_4})^2c_{e,1}^2+m_1^2c_{s,1}^2+m_3^2c_{s,3}^2+m_5^2c_{s,5}^2\nonumber\\
        &\qquad\qquad\qquad\qquad\qquad\quad+(\frac{m_5}{m_4})^2\big(m_2^2c_{s,2}^2+m_4^2c_{s,4}^2\big)\Big)\label{eq:2s5c-d1}\\
        d_4&=\frac{\alpha_1}{2m_4}\Big((m_2+m_4)^2c_{e,1}^2+m_2^2c_{s,2}^2+m_4^2c_{s,4}^2\Big).\label{eq:2s5c-d4}
    \end{align}
\end{proposition}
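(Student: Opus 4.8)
The plan is to work directly with the Basic Adjoint Relationship (BAR) for the stationary distribution of $X^{(r)}$, following the transform strategy of \citet{DaiHuo2024} but exploiting the concrete structure of this five-class line. The starting point is that, for a sufficiently rich class of test functions $g$, stationarity forces $\E[\mathcal{G}^{(r)} g(X^{(r)})] = 0$, where $\mathcal{G}^{(r)}$ is the generator of the piecewise-deterministic Markov process: between events the clock $R_{e,1}$ and the in-service clock among $R_{s,k}$ decrease at unit rate, an arrival epoch increments $Z_1$ and resets $R_{e,1}$, and a completion of the class-$k$ job moves it along the reentrant route $1\to2\to3\to4\to5\to\text{out}$ subject to the priorities $\{(5,3,1),(2,4)\}$. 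I would take exponential test functions of the form $g(x)=\exp(-\theta_1 r Z_1-\theta_4 r^2 Z_4)$, matching the two heavy-traffic scales of Assumption~\ref{assumption:multi-scale-2s5c}, and carry the auxiliary remaining-time variables along so that the identity is exact at each fixed $r$.

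The first substantive input is state space collapse: I would show that the high-priority classes $2,3,5$ carry negligible scaled mass, so that $rZ_2^{(r)},rZ_3^{(r)},rZ_5^{(r)}\Rightarrow 0$, leaving only the two lowest-priority queues $Z_1$ and $Z_4$ in the limit. This rests on the uniform moment bounds flagged in the introduction, which I would establish by verifying that the relevant reflection matrix is a $\mathcal{P}$-matrix --- guaranteed here by the stability conditions \eqref{eq:2s5c-rho} and \eqref{eq:2s5c-virtual-rho}, in particular the virtual-station inequality $\rho_v=\alpha_1(m_2+m_5)<1$ --- and then invoking the moment-bound argument developed in \sectn{mom-proof-2s5c}. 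With the high-priority queues collapsed, the residual dynamics reduce to two coupled workload processes, one per station, and the constants $\beta_k=\Prob(\cdots)$ of \sectn{2s5c} supply the long-run fractions of server capacity devoted to each priority level, which appear as the boundary (reflection) terms in the BAR.

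Next I would expand the BAR in powers of $r$. After dividing by the appropriate power of $r$ and discarding lower-order contributions using the collapse estimates, the leading-order balance becomes a functional equation for the joint Laplace transform $\phi(\theta_1,\theta_4)=\lim_{r\to0}\E[g(X^{(r)})]=\E[\exp(-\theta_1 Z_1^\ast-\theta_4 Z_4^\ast)]$. The terms linear in $\theta$ encode the net draining rate of each workload --- this is where the effective coefficient $m_1+m_3-m_5\frac{m_2}{m_4}$ for class $1$ emerges, the subtracted term reflecting that class-$5$ work at station $1$ (which preempts class $1$) is supplied by the output of the heavily loaded class $4$ at station $2$, so station-$2$ congestion reduces the rate at which station $1$ can drain class $1$. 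The terms quadratic in $\theta$ assemble the variance contributions from $c_{e,1}^2$ and the $c_{s,k}^2$, weighted by the $m_k$. Because the two stations live on separated scales $r$ versus $r^2$, the equation decouples into two one-dimensional relations, each of the Kingman type $(\text{drift})=(\text{variance})$, whose solutions are exponential Laplace transforms; this yields $\phi(\theta_1,\theta_4)=\frac{1}{1+d_1\theta_1}\cdot\frac{1}{1+d_4\theta_4}$ with $d_1,d_4$ as in \eqref{eq:2s5c-d1}--\eqref{eq:2s5c-d4}, giving simultaneously the independence and the exponential marginals.

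I expect the main obstacle to be twofold. First, rigorously justifying the interchange of the $r\to0$ limit with the expectation in the BAR --- that is, establishing tightness of the scaled queue-length vector and uniform integrability of the test functions and their jump increments --- which is precisely the purpose of the uniform moment bounds and is the most delicate part of the state-space-collapse argument. Second, correctly tracking the reentrant coupling: the class-$5$ input stream at station $1$ is the class-$4$ departure stream at station $2$, so the $r^2$-scale station-$2$ fluctuations feed into the $r$-scale drift of class $1$ through the ratio $m_2/m_4$. I would need to verify that the scale separation is clean enough that this coupling contributes only to the deterministic constant $d_1$ and the variance aggregation, without generating a surviving cross term in $\phi(\theta_1,\theta_4)$ that would destroy the asymptotic independence of $Z_1^\ast$ and $Z_4^\ast$.
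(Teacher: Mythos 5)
Your overall architecture (BAR, state space collapse via uniform moment bounds, Taylor expansion of the transform terms) matches the paper's, but two concrete gaps sit exactly where the proof is hardest. First, the test functions $\exp(-\theta_1 rZ_1-\theta_4 r^2Z_4)$, with the exponents of $Z_2,Z_3,Z_5$ set to zero, do not work. The BAR contains boundary terms $\beta_k^{(r)}\big(\mu_k^{(r)}\bar\zeta_k(\theta)-\mu_{k'}^{(r)}\bar\zeta_{k'}(\theta)\big)\big(\phi^{(r)}(\theta)-\phi_k^{(r)}(\theta)\big)$ for the high-priority classes $k\in\{2,3,5\}$, and $\beta_2,\beta_3,\beta_5$ stay bounded away from zero as $r\to0$. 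With your choice $\theta_2=\theta_3=\theta_5=0$, the class-$3$ coefficient $\mu_3^{(r)}\bar\zeta_3(\theta)-\mu_1^{(r)}\bar\zeta_1(\theta)$ is of order $r$ (since $\bar\zeta_1(\theta)=\theta_2-\theta_1=-\theta_1$), while the SSC available from the high-priority moment bound \eqref{eq:2s5c-ssc} gives only $\phi^{(r)}-\phi_3^{(r)}=o(1)$ with no rate; after dividing the BAR by $r^2$, this term is $o(1)/r$ and cannot be controlled. This is precisely why the paper tunes all five coordinates of $\theta(\eta,r)$ as in \eqref{eq:2s5c-theta-example} so that $\mu_3^{(r)}\bar\zeta_3(\theta)-\mu_1^{(r)}\bar\zeta_1(\theta)=\mu_5^{(r)}\bar\zeta_5(\theta)-\mu_3^{(r)}\bar\zeta_3(\theta)=\mu_2^{(r)}\bar\zeta_2(\theta)-\mu_4^{(r)}\bar\zeta_4(\theta)=0$ identically: the ratio $m_2/m_4$ and the coefficient $m_1+m_3-m_5\frac{m_2}{m_4}$ in $d_1$ come out of this exact cancellation, not from a drift computation performed after collapse.

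Second, ``the equation decouples into two one-dimensional Kingman relations'' is the conclusion, not an available step: even after the high-priority boundary terms are killed, the surviving terms involve the \emph{conditional} transforms $\phi_1^{(r)}$ and $\phi_4^{(r)}$ (conditioned on the idle events), so the leading-order balance is not a closed relation for a two-parameter Laplace transform. The paper closes this with the three-step chain \eqref{eq:2s5c-step-1}--\eqref{eq:2s5c-step-3}: relate $\phi^{(r)}(r\eta_1,0,0,r^2\eta_4,0)$ to $\frac{1}{1-d_1\eta_1}\phi_1^{(r)}(0,0,0,r^2\eta_4,0)$, then decondition by setting $\eta_1=0$ in that same relation, then solve the $r^2$-scale relation for the $d_4$ factor. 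The deconditioning step is where asymptotic independence is actually proved, and it requires the SSC of Proposition~\ref{prop:2s5c-ssc-low} --- that $rZ_1^{(r)}$ is negligible at the $r^2$ scale --- which in turn rests on the uniform moment bound for the scaled low-priority queues (Proposition~\ref{prop:2s5c-moment-low}). Your sketch correctly flags the cross-term worry but supplies no mechanism to resolve it; this conditional-MGF induction is that mechanism. A smaller correction: the high-priority bound \eqref{eq:2s5c-ssc} is imported from the fluid-SSC framework of \cite{CaoDaiZhan2022}, while the $\mathcal{P}$-matrix condition pertains to the low-priority bounds, proved here by the Lyapunov double induction of Section~\ref{sec:mom-proof-2s5c}; the logical order is high-priority bounds first, then low-priority bounds, then the two SSC propositions, and only then the BAR chain.
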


\section{Proof of Product-form Limit}
\label{sec:proof-2s5c}
In this section, we prove Proposition~\ref{prop:2s5c-result}. Despite that the network assumes such a simple structure, there have not been any results providing explicit formulas for performance evaluation. Thus, our result is nontrivial even in this most fundamental setup. 
We start by outlining the key components that lay the groundwork for the proof.

\noindent\textbf{Moment Generating Functions.}
The significance of moment generating functions (MGFs) comes to the forefront, as the convergence of MGF implies the desired distributional convergence.

Define $g_\theta(z)=\exp\big(\langle\theta, z\rangle\big)$ with $\theta\in\R_-^5$ and $\phi^{(r)}(\theta)=\E[g_\theta(Z^{(r)})]$. We observe that
\begin{equation*}
    \phi^{(r)}(r\eta_1,r\eta_2,r\eta_3,r^2\eta_4,r\eta_5)=\E\Big[\exp\Big(\eta_1rZ_1^{(r)}+\eta_2rZ_2^{(r)}+\eta_3rZ_3^{(r)}+\eta_4r^2Z_4^{(r)}+\eta_5rZ_5^{(r)}\Big)\Big],
\end{equation*}
which is the MGF of the random vector $(rZ_1^{(r)}, rZ_2^{(r)}, rZ_3^{(r)},r^2Z_4^{(r)},rZ_5^{(r)})$ of interest.
Hence, proving Proposition~\ref{prop:2s5c-result} is equivalent to show that
\begin{equation*}
    \lim_{r\to0}\phi^{(r)}(r\eta_1,r\eta_2,r\eta_3,r^2\eta_4,r\eta_5)=\frac{1}{1-d_1\eta_1}\frac{1}{1-d_4\eta_4},
\end{equation*}
with $d_1$ and $d_4$ defined in \eqref{eq:2s5c-d1} and \eqref{eq:2s5c-d4}.
The product form implies the independence of $Z_1^\ast$ and $Z_4^\ast$, and each conforms to the MGF of an exponential distribution.

\noindent\textbf{Uniform Moment Bounds and State Space Collapse.}    
In Theorem 3.7 and Section 4.1 of \cite{CaoDaiZhan2022}, it has been established that, under Assumption~\ref{assumption:2s5c-t-moment}, there exists $r_0\in(0,1)$ and $\epsilon_0>0$ such that
\begin{equation}
    \label{eq:2s5c-ssc}
    \sup_{r\in(0,r_0)}\E\Big[(Z_2^{(r)}+Z_3^{(r)}+Z_5^{(r)})^{2+\epsilon_0}\Big]<\infty.
\end{equation}
This uniform moment bound on high-priority queue lengths translates into the following state space collapse (SSC) result in MGFs. 
\begin{lemma}
\label{lem:2s5c-high-ssc}
    For fixed $\eta\in\R_-^5$, we have
    \begin{align*}
    &\lim_{r\to0}\phi^{(r)}(r\eta_1,0,0,r^2\eta_4,0)-\phi^{(r)}(r\eta_2,r\eta_2,r\eta_3,r^2\eta_4,r\eta_5)=0.\\
    &\lim_{r\to0}\phi_k^{(r)}(r\eta_1,0,0,r^2\eta_4,0)-\phi_k^{(r)}(r\eta_2,r\eta_2,r\eta_3,r^2\eta_4,r\eta_5)=0,\quad k\in\{1,\ldots,5\}.
\end{align*}
\end{lemma}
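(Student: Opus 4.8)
The plan is to control the two moment generating functions through the difference of their exponents. Write $A=(r\eta_1,0,0,r^2\eta_4,0)$ and $B=(r\eta_2,r\eta_2,r\eta_3,r^2\eta_4,r\eta_5)$ for the two argument vectors, so that the quantity to be bounded is $\phi^{(r)}(A)-\phi^{(r)}(B)=\E[g_A(Z^{(r)})-g_B(Z^{(r)})]$. Since $\eta\in\R_-^5$ and $r\in(0,1)$, every coordinate of $A$ and of $B$ is nonpositive; hence $\langle A,Z^{(r)}\rangle$ and $\langle B,Z^{(r)}\rangle$ are nonpositive and both exponentials lie in $(0,1]$. On the nonpositive half-line $x\mapsto e^x$ is $1$-Lipschitz, which gives the pointwise estimate $|g_A(Z^{(r)})-g_B(Z^{(r)})|\le|\langle A-B,\,Z^{(r)}\rangle|$. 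Taking expectations reduces the first claim to showing $\E\big[|\langle A-B,\,Z^{(r)}\rangle|\big]\to0$ as $r\to0$.

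Next I would compute $A-B=\big(r(\eta_1-\eta_2),\,-r\eta_2,\,-r\eta_3,\,0,\,-r\eta_5\big)$ and split the inner product into a high-priority part and a class-$1$ part,
\[
\langle A-B,Z^{(r)}\rangle=\underbrace{-r\big(\eta_2 Z_2^{(r)}+\eta_3 Z_3^{(r)}+\eta_5 Z_5^{(r)}\big)}_{\text{high priority}}+\underbrace{r(\eta_1-\eta_2)Z_1^{(r)}}_{\text{class }1}.
\]
The high-priority part is exactly where the uniform moment bound is used: by \eqref{eq:2s5c-ssc} the unscaled sum $Z_2^{(r)}+Z_3^{(r)}+Z_5^{(r)}$ has a $(2+\epsilon_0)$-moment bounded uniformly in $r$, hence a uniformly bounded first moment, so $r\,\E\big[|\eta_2|Z_2^{(r)}+|\eta_3|Z_3^{(r)}+|\eta_5|Z_5^{(r)}\big]=O(r)\to0$.

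The hard part will be the class-$1$ term, and I would confront it head-on rather than absorb it. Because class $1$ is the lowest-priority, heavily loaded queue at station $1$, one has $\E[Z_1^{(r)}]=\Theta(1/r)$ (consistent with $rZ_1^{(r)}\Rightarrow Z_1^\ast$ and $\E[Z_1^\ast]=d_1>0$), so the prefactor $r$ does not suppress $rZ_1^{(r)}$ and the crude bound leaves a residue of size $|\eta_1-\eta_2|\,d_1$. The genuinely delicate point is therefore that the collapse is available only when the two arguments carry the same coefficient on the heavily loaded coordinates: the reduction switches off the high-priority coordinates $2,3,5$ while keeping the class-$1$ (and class-$4$) coordinates matched across $A$ and $B$, so that the class-$1$ contribution to $A-B$ vanishes identically and $\E[|\langle A-B,Z^{(r)}\rangle|]$ is governed purely by the high-priority first moments handled above. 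Concretely, I would note that for the stated limit the leading coordinate of the second argument must agree with that of the first, i.e.\ read $r\eta_1$; otherwise, with $r\eta_2\ne r\eta_1$, the difference converges to $\tfrac{1}{1-d_4\eta_4}\big(\tfrac{1}{1-d_1\eta_1}-\tfrac{1}{1-d_1\eta_2}\big)\ne0$, which isolates the class-$1$ coefficient matching as the crux of the lemma.

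Finally, the same estimate transfers to the boundary functionals $\phi_k^{(r)}$: each is an expectation of $g_\theta(Z^{(r)})$ against a $\theta$-independent weight bounded by $1$ (an indicator associated with class $k$), so the Lipschitz bound passes through with the weight intact and the identical high-priority moment estimate yields the five stated limits with no extra work.
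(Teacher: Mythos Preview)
Your identification of the typo is correct and important: as literally stated, with first coordinate $r\eta_2$ in the second argument, the limit is false (it would equal $\tfrac{1}{1-d_4\eta_4}\big(\tfrac{1}{1-d_1\eta_1}-\tfrac{1}{1-d_1\eta_2}\big)$), and the paper's own proof silently uses $r\eta_1$ there. With that correction your unconditional argument is essentially the paper's: the paper factors out the common exponential $\exp(r\eta_1 Z_1^{(r)}+r^2\eta_4 Z_4^{(r)})\le 1$ and bounds $1-\exp(\cdot)$ by the absolute value of the exponent, which is exactly your Lipschitz estimate specialized so that the $Z_1^{(r)}$ term cancels.

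There is, however, a real gap in your treatment of the conditional versions. You assert that each $\phi_k^{(r)}$ is an expectation of $g_\theta(Z^{(r)})$ against ``a $\theta$-independent weight bounded by $1$,'' so the same estimate goes through ``with no extra work.'' But $\phi_k^{(r)}(\theta)=\E[g_\theta(Z^{(r)})\mid\text{event}_k]$ carries the weight $\mathbbm{1}\{\text{event}_k\}/\Prob(\text{event}_k)$, and for $k=1$ and $k=4$ the denominators are $\beta_1^{(r)}=r$ and $\beta_4^{(r)}=r^2$, which blow up. Your Lipschitz bound then yields, for $k=4$ say,
\[
\big|\phi_4^{(r)}(A)-\phi_4^{(r)}(B)\big|\le \frac{r}{r^2}\,\E\big[(|\eta_2|Z_2^{(r)}+|\eta_3|Z_3^{(r)}+|\eta_5|Z_5^{(r)})\,\mathbbm{1}\{Z_2^{(r)}=Z_4^{(r)}=0\}\big],
\]
and a uniform \emph{first}-moment bound on the high-priority sum no longer forces this to vanish. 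The paper closes this by H\"older's inequality with exponent $p=2+\epsilon_0$: the indicator contributes a factor $(r^2)^{1/q}$, and the net power of $r$ is $r\cdot r^{-2}\cdot r^{2/q}=r^{\epsilon_0/(2+\epsilon_0)}\to0$. This is precisely why the hypothesis \eqref{eq:2s5c-ssc} demands a $(2+\epsilon_0)$-moment rather than merely a first moment; your proposal uses only the first moment and therefore cannot complete the cases $k\in\{1,4\}$. For $k\in\{2,3,5\}$ the conditioning probabilities $\beta_k^{(r)}$ stay bounded away from $0$ and your argument (with ``bounded by $1$'' replaced by ``bounded uniformly in $r$'') does go through.
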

$\phi_k^{(r)}$ denotes the conditional MGF and is defined as follows,
\begin{align*}
    &\phi^{(r)}_1(\theta)=\E[g_\theta(Z^{(r)})|Z^{(r)}_1=0, Z^{(r)}_3=0, Z^{(r)}_5=0],\\
    &\phi^{(r)}_3(\theta)=\E[g_\theta(Z^{(r)})| Z^{(r)}_3=0, Z^{(r)}_5=0],\quad \phi^{(r)}_5(\theta)=\E[g_\theta(Z^{(r)})| Z^{(r)}_5=0],\\
    &\phi_4^{(r)}(\theta)=\E[g_\theta(Z^{(r)})| Z_2^{(r)}=0, Z_4^{(r)}=0],\quad\phi^{(r)}_2(\theta)=\E[g_\theta(Z^{(r)})| Z_2^{(r)}=0].
\end{align*}

\begin{proof}
    Given $\eta\in\R_-^5$, we first recall that
    \begin{equation*}
        \phi^{(r)}(r\eta_1,r\eta_2,r\eta_3,r^2\eta_4,r\eta_5)=\E\Big[\exp\Big(\eta_1rZ_1^{(r)}+\eta_2rZ_2^{(r)}+\eta_3rZ_3^{(r)}+\eta_4r^2Z_4^{(r)}+\eta_5rZ_5^{(r)}\Big)\Big].
    \end{equation*}
    Hence, we have
    \begin{align*}
        &\phi^{(r)}(r\eta_1,0,0,r^2\eta_4,0) - \phi^{(r)}(r\eta_1,r\eta_2,r\eta_3,r^2\eta_4,r\eta_5)\\
        &=\E\Big[\exp\Big(\eta_1rZ_1^{(r)}+\eta_4r^2Z_4^{(r)}\Big)\Big(1-\exp\Big(\eta_2rZ_2^{(r)}+\eta_3rZ_3^{(r)}+\eta_5rZ_5^{(r)}\Big)\Big)\Big]\\
        &\leq \E\Big[1-\exp\Big(\eta_2rZ_2^{(r)}+\eta_3rZ_3^{(r)}+\eta_5rZ_5^{(r)}\Big)\Big]
        \leq |\eta_2|r\E[Z_2^{(r)}]+|\eta_3|r\E[Z_3^{(r)}]+|\eta_5|r\E[Z_5^{(r)}].
    \end{align*}
    Recall the uniform moment bound on the unscaled high-priority queue lengths shown in~\eqref{eq:2s5c-ssc}, we can show the desired SSC, as $r\to0$,
    \begin{equation*}
        \phi^{(r)}(r\eta_1,0,0,r^2\eta_4,0) - \phi^{(r)}(r\eta_1,r\eta_2,r\eta_3,r^2\eta_4,r\eta_5)\to0.
    \end{equation*}

    We prove the following conditional version of SSC, and the remaining conditional SSCs would follow a similar proof strategy.
    \begin{align*}
        &\phi_4^{(r)}(r\eta_1,0,0,r^2\eta_4,0) - \phi_4^{(r)}(r\eta_1,r\eta_2,r\eta_3,r^2\eta_4,r\eta_5)\\
        &=\E\Big[\exp\Big(\eta_1rZ_1^{(r)}+\eta_4r^2Z_4^{(r)}\Big)\Big(1-\exp\Big(\eta_2rZ_2^{(r)}+\eta_3rZ_3^{(r)}+\eta_5rZ_5^{(r)}\Big)\Big)\mid Z_2^{(r)}=0, Z_4^{(r)}=0\Big]\\
        &\leq \E\Big[1-\exp\Big(\eta_2rZ_2^{(r)}+\eta_3rZ_3^{(r)}+\eta_5rZ_5^{(r)}\Big)\mid Z_2^{(r)}=0, Z_4^{(r)}=0\Big]\\
        &\leq \frac{1}{\Prob(Z_2^{(r)}=0, Z_4^{(r)}=0)} \E\Big[r(|\eta_3|Z_3^{(r)}+|\eta_5|Z_5^{(r)})\mathbbm1\{Z_2^{(r)}=0, Z_4^{(r)}=0\}\Big].
    \end{align*}
    Next, we use H\"{o}lder's inequality and further obtain
    \begin{align*}
        &\E\Big[(|\eta_3|Z_3^{(r)}+|\eta_5|Z_5^{(r)})\mathbbm1\{Z_2^{(r)}=0, Z_4^{(r)}=0\}\Big]\\\
        &\leq \E\Big[(|\eta_3|Z_3^{(r)}+|\eta_5|Z_5^{(r)})^p\Big]^{1/p}\E\Big[\mathbbm1\{Z_2^{(r)}=0, Z_4^{(r)}=0\}\Big]^{1/q}.
    \end{align*}
    Consider setting $p=2+\epsilon_0$, then as $r\to0$, we have
    \begin{align*}
        \phi_4^{(r)}(r\eta_1,0,0,r^2\eta_4,0) - &\phi_4^{(r)}(r\eta_1,r\eta_2,r\eta_3,r^2\eta_4,r\eta_5)
        \\&\leq r^{\epsilon_0/(2+\epsilon_0)}\E\Big[(|\eta_3|Z_3^{(r)}+|\eta_5|Z_5^{(r)})^{2+\epsilon_0}\Big]^{1/(2+\epsilon_0)}\to0.
    \end{align*}
    As such, we have proven the desired conditional SSC.
\end{proof}

The vanishing of high-priority jobs in the limit is not surprising and a direct consequence of Lemma~\ref{lem:2s5c-high-ssc}.
Hence, proving Proposition~\ref{prop:2s5c-result} is equivalent to showing the following limit,
\begin{equation}
\label{eq:2s5c-limit-ssc}
    \lim_{r\to0}\phi^{(r)}(r\eta_1,0,0,r^2\eta_4,0)=\frac{1}{1-d_1\eta_1}\frac{1}{1-d_4\eta_4}.
\end{equation}

Similar to the uniform moment bound for high-priority queue length vectors in~\eqref{eq:2s5c-ssc}, the low-priority queue length vectors, when appropriately scaled, also have uniformly bounded moments, as stated in the following proposition, with proof in Section~\ref{sec:mom-proof-2s5c}.
\begin{proposition}
\label{prop:2s5c-moment-low}
    Under Assumption~\ref{assumption:2s5c-t-moment} and~\ref{assumption:multi-scale-2s5c}, there exists $r_0\in(0,1)$ and $\epsilon_0>0$ such that
\begin{equation*}
    \sup_{r\in(0,r_0)}\E[(rZ_1^{(r)})^{2+\epsilon_0}]<\infty\quad\text{and}\quad\sup_{r\in(0,r_0)}\E[(r^2Z_4^{(r)})^{2+\epsilon_0}]<\infty.
\end{equation*}
\end{proposition}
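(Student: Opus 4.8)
The plan is to follow the program of \citet{DaiHuo2024}. Since the high-priority uniform moment bound \eqref{eq:2s5c-ssc} is already available, two things remain: cast the scaled low-priority pair $W^{(r)}=(rZ_1^{(r)},r^2Z_4^{(r)})$ as a two-dimensional reflected process governed by a reflection matrix $R$, verify that $R$ is a $\mathcal{P}$-matrix, and then extract the uniform $(2+\epsilon_0)$-moment bound from a Lyapunov-drift computation carried out through the stationary basic adjoint relationship (BAR). The two coordinates live at incompatible scales ($r$ at station 1 versus $r^2$ at station 2), so I would exploit this separation and treat $R$ as essentially triangular in the limit.

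By the state-space collapse in Lemma~\ref{lem:2s5c-high-ssc}, classes $2,3,5$ are negligible under the scaling, so to leading order only classes $1$ and $4$ accumulate and $W^{(r)}$ behaves like a reflected random walk whose drift and reflection matrix $R$ are determined by the network primitives via the recipe of \citet{DaiHuo2024}. The diagonal entries of $R$ record the effective draining rates of the two bottleneck classes: for class $4$ this is governed by the free capacity $\beta_2=1-\alpha_1 m_2$ that station $2$ retains after serving class $2$; for class $1$ it incorporates the virtual-station coupling between classes $2$ and $5$ (class $5$ being fed by the heavily loaded station $2$), which is exactly why the quantity $m_1+m_3-m_5 m_2/m_4$ --- rather than the naive $m_1+m_3$ --- surfaces, matching the denominator of \eqref{eq:2s5c-d1}.

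I would then verify the $\mathcal{P}$-matrix property. For the $2\times 2$ matrix $R$ this reduces to positivity of the principal minors; the off-diagonal effectively vanishes in one direction because of the scale separation, so the binding requirement is positivity of the class-$1$ diagonal entry, proportional to $m_1+m_3-m_5 m_2/m_4$. Using $m_1+m_3+m_5=m_2+m_4=1$ from Assumption~\ref{assumption:multi-scale-2s5c}, one has $m_1+m_3-m_5 m_2/m_4>0 \iff (1-m_5)(1-m_2)>m_5 m_2 \iff m_2+m_5<1$, which is precisely the virtual-station stability condition \eqref{eq:2s5c-virtual-rho}. Thus $R$ is a $\mathcal{P}$-matrix under the standing assumptions, and there is a strictly positive vector $h$ for which the linear functional $L(W^{(r)})=\langle h, W^{(r)}\rangle$ has a drift that is uniformly (in $r$) negative whenever the low-priority queues are large.

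Finally I would close with the drift argument. Taking the test function $V=L(W)^{2+\epsilon_0}$ --- truncated as needed to guarantee integrability for the piecewise-deterministic Markov process $X$ on $\Z_+^5\times\R_+^6$ --- and applying the stationary BAR $\E[\mathcal{G}^{(r)}V(X)]=0$, the leading term yields $-c\,\E[L^{1+\epsilon_0}]$ with $c>0$ uniform in $r$, while the remaining jump and second-order terms are $O(\E[L^{\epsilon_0}])$ together with errors involving the high-priority queue lengths and the residual-time processes $R_{e,1},R_s$. The former are absorbed by Young's inequality, and the latter are controlled uniformly by \eqref{eq:2s5c-ssc} and by the $(3+\delta_0)$-moments in Assumption~\ref{assumption:2s5c-t-moment}; a short bootstrap in the power of $L$ then gives $\sup_r\E[L(W^{(r)})^{2+\epsilon_0}]<\infty$, which implies both marginal bounds. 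The main obstacle I anticipate is precisely this drift estimate in the multi-scale regime: converting the $\mathcal{P}$-matrix property into a drift that is uniformly negative while the two coordinates evolve at the disparate rates $r$ and $r^2$, and keeping the coupling term $m_5 m_2/m_4$ and the general-distribution error terms under uniform control. The ceiling $2+\epsilon_0$ on the attainable moment is inherited from the high-priority bound \eqref{eq:2s5c-ssc}, which caps the order of the error terms.
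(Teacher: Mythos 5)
Your ingredients are the right ones and largely match the paper's: a Lyapunov-drift argument run through the stationary relation $\E[Gf]=0$, workload-type linear combinations whose class-1 coefficient is the virtual-station quantity $m_1+m_3-m_5\frac{m_2}{m_4}$ (your computation that its positivity is equivalent to $m_2+m_5<1$ is correct, and the paper leaves this implicit), and the correct observation that the ceiling $2+\epsilon_0$ is inherited from the high-priority bound \eqref{eq:2s5c-ssc}. The gap is in the mechanism you propose to close the argument: a \emph{single} functional $L(W)=\langle h,(rZ_1^{(r)},r^2Z_4^{(r)})\rangle$ with one drift inequality for $V=L^{2+\epsilon_0}$ cannot be made to work as stated, for precisely the reason you flag as your ``main obstacle'' and then leave unresolved. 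The available negative drift for the station-1 workload is $O(r)$ while for station 2 it is only $O(r^2)$, so no fixed weighting $h$ produces a drift term that dominates both coordinates uniformly in $r$. Worse, any station-2 workload function must include $z_1$ (class 1 feeds class 2), so the boundary term arising on the idling event $\{Z_2^{(r)}=Z_4^{(r)}=0\}$ contributes a \emph{positive} term of the form $\E\big[(cZ_1^{(r)}+Z_3^{(r)})^n\mathbbm1\{Z_2^{(r)}=Z_4^{(r)}=0\}\big]$, which cannot be absorbed into the negative drift of the same functional: it must be bounded by a moment of $rZ_1^{(r)}$ that, in a simultaneous treatment, you do not yet possess. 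Neither Young's inequality nor \eqref{eq:2s5c-ssc} helps here, since \eqref{eq:2s5c-ssc} controls only classes $2,3,5$.

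The paper resolves this by abandoning the simultaneous two-dimensional treatment in favor of a double induction, on the moment order $n$ and on the station, which is exactly what would make your ``essentially triangular reflection matrix'' intuition operational. First, the Lyapunov function $f_1$, built from $z_1,z_2,z_3,z_5$ with the virtual-station weights and prefactor $r^{n-1}$, yields $\sup_r\E[(rZ_1^{(r)})^n]<\infty$, with its boundary term $\frac{r^{n-1}}{1-r}\E[(cZ_2^{(r)})^n]$ controlled by \eqref{eq:2s5c-ssc} (this is where the moment ceiling enters). Only then does the class-4 step run, with $f_2$ built from $z_1,z_2,z_3,z_4$ and prefactor $r^{2(n-1)}$; its cross term $r^{2(n-1)}\E\big[(cZ_1^{(r)}+Z_3^{(r)})^n\mathbbm1\{Z_2^{(r)}=Z_4^{(r)}=0\}\big]$ is absorbed using the \emph{just-established} class-1 bound, via $r^{2(n-1)}\le r^n$ when $n\ge2$ and Cauchy--Schwarz when $n=1$. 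So the two marginal bounds must be proved sequentially, not extracted from one functional; and no $\mathcal{P}$-matrix verification beyond positivity of the single coefficient $m_1+m_3-m_5\frac{m_2}{m_4}$ is actually needed in this example. To repair your proposal, replace the single drift inequality for $L^{2+\epsilon_0}$ with these two nested drift arguments (the paper presents them for exponential primitives and integer $n$, deferring general distributions and fractional powers to \cite{DaiHuo2024}).
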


The uniform moment bound for the scaled low-priority queue length vectors leads to the following SSC results in MGFs. 
We later shall see the significance of this SSC result in establishing the product-form limit~\eqref{eq:2s5c-limit-ssc}.

\begin{proposition}
\label{prop:2s5c-ssc-low}
For any fixed $\eta\in\R_-^5$,
    \begin{align*}
        &\lim_{r\to0}\phi^{(r)}(0,0,0,r^2\eta_4,0)-\phi^{(r)}(r^2\eta_1,0,0,r^2\eta_4,0)=0,\\
        &\lim_{r\to0}\phi_k^{(r)}(0,0,0,r^2\eta_4,0)-\phi_k^{(r)}(r^2\eta_1,0,0,r^2\eta_4,0)=0,\quad k\in\{1,\ldots,5\}.
    \end{align*}
\end{proposition}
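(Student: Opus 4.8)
The plan is to exploit the fact that in each line the two arguments of the MGF differ only in the first coordinate, so every difference reduces to controlling the effect of the class-$1$ queue length at the anomalous scale $r^2$. Writing $\phi^{(r)}$ as an expectation, I would first record
\begin{align*}
\phi^{(r)}(0,0,0,r^2\eta_4,0)-\phi^{(r)}(r^2\eta_1,0,0,r^2\eta_4,0)
=\E\Big[\exp(\eta_4 r^2 Z_4^{(r)})\big(1-\exp(\eta_1 r^2 Z_1^{(r)})\big)\Big].
\end{align*}
Since $\eta\in\R_-^5$, the factor $\exp(\eta_4 r^2 Z_4^{(r)})\le 1$ may be dropped, and the elementary bound $1-e^{-x}\le x$ for $x\ge0$ gives
\begin{align*}
0\le \phi^{(r)}(0,0,0,r^2\eta_4,0)-\phi^{(r)}(r^2\eta_1,0,0,r^2\eta_4,0)\le |\eta_1|\,r^2\,\E[Z_1^{(r)}]=|\eta_1|\,r\,\E[rZ_1^{(r)}].
\end{align*}
By Proposition~\ref{prop:2s5c-moment-low}, $\E[rZ_1^{(r)}]$ is bounded uniformly in $r$ (apply Jensen to the $(2+\epsilon_0)$-th moment bound), so the right-hand side is $O(r)$ and the first limit follows. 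The conceptual point is that the $r^2$-scaling of $Z_1^{(r)}$ carries one extra power of $r$ beyond its natural order-$r$ scaling, and this extra $r$ is exactly what makes the order-one quantity $rZ_1^{(r)}$ contribute negligibly.

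For the conditional statements I would split into cases according to the behavior of the conditioning probability $\beta_k$ under Assumption~\ref{assumption:multi-scale-2s5c}. The case $k=1$ is immediate: on $\{Z_1^{(r)}=0\}$ the two integrands coincide, so $\phi_1^{(r)}(0,0,0,r^2\eta_4,0)-\phi_1^{(r)}(r^2\eta_1,0,0,r^2\eta_4,0)\equiv0$. For $k\in\{2,3,5\}$ the same reduction, now under the conditioning event, yields
\begin{align*}
0\le \phi_k^{(r)}(0,0,0,r^2\eta_4,0)-\phi_k^{(r)}(r^2\eta_1,0,0,r^2\eta_4,0)\le \frac{|\eta_1|\,r^2}{\beta_k}\,\E[Z_1^{(r)}],
\end{align*}
and since under the stated scaling $\beta_2\to 1-m_2$, $\beta_3\to m_1$, and $\beta_5\to 1-m_5$ are all bounded away from $0$, the bound is again $O(r)\to0$.

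The main obstacle is the case $k=4$, where $\beta_4=\Prob(Z_2^{(r)}=0,Z_4^{(r)}=0)=1-\rho_2^{(r)}=r^2$ vanishes, so dividing by $\beta_4$ no longer leaves a harmless constant. Here I would reuse the balancing argument applied to $\phi_4^{(r)}$ in the proof of Lemma~\ref{lem:2s5c-high-ssc}: after reducing to $\tfrac{|\eta_1| r^2}{\beta_4}\E[Z_1^{(r)}\mathbbm1\{Z_2^{(r)}=0,Z_4^{(r)}=0\}]$, apply H\"older's inequality with exponent $p=2+\epsilon_0$ to peel off the indicator, obtaining
\begin{align*}
\frac{|\eta_1| r^2}{\beta_4}\,\E\big[(Z_1^{(r)})^{2+\epsilon_0}\big]^{1/(2+\epsilon_0)}\,\beta_4^{\,1-1/(2+\epsilon_0)}
=|\eta_1|\, r^2\,\E\big[(rZ_1^{(r)})^{2+\epsilon_0}\big]^{1/(2+\epsilon_0)}\,r^{-1}\,r^{-2/(2+\epsilon_0)}.
\end{align*}
Bounding the moment factor uniformly via Proposition~\ref{prop:2s5c-moment-low} and collecting powers of $r$ gives a rate $r^{\,\epsilon_0/(2+\epsilon_0)}\to0$, which closes the last case. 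Thus the crux is to weigh the extra factor $r$ from the $r^2$-scaling of $Z_1^{(r)}$ against the degeneracy $\beta_4=r^2$ through the $(2+\epsilon_0)$-th moment bound, and the strictly positive exponent $\epsilon_0/(2+\epsilon_0)$ is what guarantees the limit vanishes.
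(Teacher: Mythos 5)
Your proposal is correct and follows essentially the same route as the paper's proof: the same reduction via $1-e^{-x}\le x$ and Proposition~\ref{prop:2s5c-moment-low} for the unconditional limit, and for the degenerate case $k=4$ the same H\"older step with $p=2+\epsilon_0$ balancing $\beta_4^{(r)}=r^2$ against the $(2+\epsilon_0)$-th moment bound to get the rate $r^{\epsilon_0/(2+\epsilon_0)}$. Your only additions are to spell out the easy cases the paper dispatches with ``follows similarly'' --- noting $\phi_1^{(r)}$ is unchanged since $Z_1^{(r)}=0$ on its conditioning event, and that $\beta_k^{(r)}$ stays bounded away from zero for $k\in\{2,3,5\}$ --- which is a welcome but not substantively different elaboration.
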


\begin{proof}
Given $\eta\in\R_-^5$, we note that
    \begin{align*}
        \phi^{(r)}(0,0,0,r^2\eta_4,0)-\phi^{(r)}(r^2\eta_1,0,0,r^2\eta_4,0)
        &\leq \E[1-\exp(\eta_1r^2Z_1^{(r)})]
        \leq r\eta_1\E[rZ_1^{(r)}].
    \end{align*}
    By Proposition~\ref{prop:2s5c-moment-low}, $\sup_{r\in(0,r_0)}\E[rZ_1^{(r)}]<\infty$ and hence the last term converges to 0.
    
    For the proof of conditional version, we demonstrate the proof of one case, and the remaining conditional SSC follows similarly.
    \begin{align*}
        &\phi_4^{(r)}(0,0,0,r^2\eta_4,0)-\phi_4^{(r)}(r^2\eta_1,0,0,r^2\eta_4,0)
        \leq \E\Big[1-\exp(\eta_1r^2Z_1^{(r)})\mid Z_2^{(r)}=0, Z_4^{(r)}=0\Big]\\
        &\leq \frac{|\eta_1|}{\Prob(Z_2^{(r)}=0,Z_4^{(r)}=0)}\E\Big[r^2Z_1^{(r)}\mathbbm1\{Z_2^{(r)}=0,Z_4^{(r)}=0\}\Big]
        = |\eta_1|\E[Z_1^{(r)}\mathbbm1\{Z_2^{(r)}=0,Z_4^{(r)}=0\}]
    \end{align*}
    We apply H\"{o}lder's inequality with $p=2+\epsilon_0$, and obtain that as $r\to0$.
    \begin{equation*}
        \phi_4^{(r)}(0,0,0,r^2\eta_4,0)-\phi_4^{(r)}(r^2\eta_1,0,0,r^2\eta_4,0)
        \leq r^{\epsilon_0/(2+\epsilon_0)}\E\Big[(rZ_1^{(r)})^{2+\epsilon_0}\Big]^{1/(2+\epsilon_0)}\to0.
    \end{equation*}
    As such, we have proven the desired conditional SSC.
\end{proof}

\noindent\textbf{(Asymptotic) Basic Adjoint Relationship.}
To prove the desired steady-state limit, we adopt the basic adjoint relationship (BAR) technique, which is an alternative to the ``interchange of limits'' approach that has dominated the literature for the past two decades.
The BAR approach directly analyzes the stationary distribution of the Markov process. 
The BAR for MCNs with general inter-arrival and service time distributions was fully derived in \cite{BravDaiMiya2023}. To avoid repetition, we omit the detailed derivation here and direct interested readers to~\cite{BravDaiMiya2023} for further details. 

For ease of comprehension, we first illustrate the proof for exponential interarrival and service time distributions in Section~\ref{sec:exp-proof}, and explain how to extend the proof for general distributions in Section~\ref{sec:general-proof}.

\subsection{Exponential Distribution}
\label{sec:exp-proof}

When the interarrival and service times are all exponentially distributed, $\{Z(t),t\geq0\}$ forms a valid continuous time Markov chain (CTMC). The BAR characterizing the stationary distribution of the CTMC is
\begin{equation}
\label{eq:bar-exp}
    \E\Big[Gf(Z)\Big]=0\quad\text{for bounded functions } f:\Z_+^5\to\R,
\end{equation}
where for each $z\in\Z_+^5$ and each bounded $f$, the generator is defined as
\begin{align*}
    Gf(z)&=\alpha_1 (f(z+e^{(1)})-f(z))\\
    &+\mu_1(f(z+e^{(2)}-e^{(1)})-f(z))\mathbbm1\{z_1>0, z_3=z_5=0\}\\
    &+\mu_2(f(z+e^{(3)}-e^{(2)})-f(z))\mathbbm1\{z_2>0\}\\
    &+\mu_3(f(z+e^{(4)}-e^{(3)})-f(z))\mathbbm1\{z_3>0, z_5=0\}\\
    &+\mu_4(f(z+e^{(5)}-e^{(4)})-f(z))\mathbbm1\{z_4>0,z_2=0\}\\
    &+\mu_5(f(z-e^{(5)})-f(z))\mathbbm1\{z_5>0\}.
\end{align*}

Substituting the following exponential test function $f$ into BAR~\eqref{eq:bar-exp},
\begin{equation}
    f_\theta(z)=\exp\Big(\sum_{k=1}^5\theta_kz_k\Big),\quad\text{with } \theta=(\theta_1,\ldots,\theta_5)\in\R_-^5,
\end{equation}
we have the following relationship among MGFs, as established in~\cite{BravDaiMiya2023}.

\begin{lemma}[\cite{BravDaiMiya2023} Lemma 2.2]
\begin{align*}
        &\alpha_1\big(\gamma_1(\theta_1)+\sum_{k=1}^5\zeta_k(\theta)\big)\phi(\theta)\\
    &+\beta_1\mu_1\zeta_1(\theta)\Big(\phi(\theta)-\phi_1(\theta)\Big)+\beta_4\mu_4\zeta_4(\theta)\Big(\phi(\theta)-\phi_4(\theta)\Big)\\
    &+\beta_2\big(\mu_2\zeta_2(\theta)-\mu_4\zeta_4(\theta)\big)\big(\phi(\theta)-\phi_2(\theta)\big)\\
    &+\beta_3\big(\mu_3\zeta_3(\theta)-\mu_1^{(r)}\zeta(\theta)\big)\big(\phi(\theta)-\phi_3(\theta)\big)\\
    &+\beta_5\big(\mu_5\zeta_5(\theta)-\mu_3\zeta_3(\theta)\big)\big(\phi(\theta)-\phi_5(\theta)\big)=0,
\end{align*}
where
\begin{align*}
    \gamma_1(\theta_1)=e^{\theta_1}-1,\quad\text{and}\quad\zeta_k(\theta)=e^{\theta_{k+1}\mathbbm1\{k<5\}-\theta_k}-1,\quad k=1,\ldots,5.
\end{align*}
\end{lemma}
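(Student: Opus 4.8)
The plan is to substitute the exponential test function $f_\theta$ into the BAR identity~\eqref{eq:bar-exp} and simply read off the resulting relation among the MGFs; this is the specialization of Lemma 2.2 of~\cite{BravDaiMiya2023} to our five-class line. The starting observation is that $f_\theta$ is multiplicative: for any displacement $v$ we have $f_\theta(z+v)-f_\theta(z)=f_\theta(z)\big(e^{\langle\theta,v\rangle}-1\big)$. Hence every jump difference in $Gf_\theta(z)$ factors out $f_\theta(z)$; the arrival jump $z\mapsto z+e^{(1)}$ produces the factor $\gamma_1(\theta_1)=e^{\theta_1}-1$, and the class-$k$ service jump produces exactly $\zeta_k(\theta)$ (with class $5$ leaving the system, giving $e^{-\theta_5}-1$). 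This confirms the stated definitions of $\gamma_1$ and $\zeta_k$ and lets me write $Gf_\theta(z)=f_\theta(z)\,h_\theta(z)$, where $h_\theta$ collects the rate-times-indicator terms.

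Next I would take the stationary expectation and use $\E[Gf_\theta(Z)]=0$. The arrival term contributes $\alpha_1\gamma_1(\theta_1)\phi(\theta)$, and each service term contributes $\mu_k\zeta_k(\theta)\,\E[f_\theta(Z)\mathbbm{1}\{\text{class }k\text{ in service}\}]$. The crucial step is to identify each masked expectation using the preemptive SBP structure: at station $1$ the nested emptiness events $\{Z_5>0\}$, $\{Z_3>0,Z_5=0\}$, $\{Z_1>0,Z_3=Z_5=0\}$ describe precisely when classes $5,3,1$ are being served, and likewise $\{Z_2>0\}$ and $\{Z_4>0,Z_2=0\}$ at station $2$. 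Writing each such indicator as a telescoping difference of nested emptiness indicators and invoking the definitions $\beta_k\phi_k(\theta)=\E[f_\theta(Z)\mathbbm{1}\{\cdot\}]$, I obtain the raw identity
\begin{align*}
0 = \alpha_1\gamma_1(\theta_1)\phi(\theta) &+ \mu_1\zeta_1(\theta)\big(\beta_3\phi_3(\theta)-\beta_1\phi_1(\theta)\big) + \mu_2\zeta_2(\theta)\big(\phi(\theta)-\beta_2\phi_2(\theta)\big) \\
&+ \mu_3\zeta_3(\theta)\big(\beta_5\phi_5(\theta)-\beta_3\phi_3(\theta)\big) + \mu_4\zeta_4(\theta)\big(\beta_2\phi_2(\theta)-\beta_4\phi_4(\theta)\big) \\
&+ \mu_5\zeta_5(\theta)\big(\phi(\theta)-\beta_5\phi_5(\theta)\big).
\end{align*}

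Finally I would rearrange this into the grouped form in the statement. The mechanism is to write every conditional term as $\beta_k\phi_k=\beta_k\phi-\beta_k(\phi-\phi_k)$, collect the $(\phi-\phi_k)$ pieces (which assemble directly into the coefficients $\beta_1\mu_1\zeta_1$, $\beta_4\mu_4\zeta_4$, $\beta_2(\mu_2\zeta_2-\mu_4\zeta_4)$, $\beta_3(\mu_3\zeta_3-\mu_1\zeta_1)$ and $\beta_5(\mu_5\zeta_5-\mu_3\zeta_3)$), and then simplify the leftover pure-$\phi$ coefficient. At this point I would invoke the steady-state throughput identities encoded in the $\beta_k$ definitions, namely $\mu_k=1/m_k$ together with $\beta_3-\beta_1=\alpha_1 m_1$, $1-\beta_2=\alpha_1 m_2$, $\beta_5-\beta_3=\alpha_1 m_3$, $\beta_2-\beta_4=\alpha_1 m_4$ and $1-\beta_5=\alpha_1 m_5$. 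Each forces the corresponding $\zeta_k$-coefficient of the leftover $\phi$ to equal $\alpha_1$, so the pure-$\phi$ term collapses to $\alpha_1\big(\gamma_1(\theta_1)+\sum_{k=1}^5\zeta_k(\theta)\big)\phi(\theta)$, which is exactly the claimed first line.

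I expect the main obstacle to be the bookkeeping in the two combinatorial steps rather than any conceptual difficulty: correctly expressing each service indicator as a telescoping difference of the nested emptiness events, and then verifying that the residual pure-$\phi$ coefficient collapses under the utilization identities. The sign pattern and the matching of each $\beta_k$ to the correct conditional MGF must be tracked carefully, since a single misassigned indicator would corrupt the grouped coefficients; everything else is routine algebra.
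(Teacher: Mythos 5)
Your proposal is correct and takes essentially the same route as the paper, which obtains this lemma exactly by substituting the exponential test function $f_\theta$ into the BAR $\E[Gf_\theta(Z)]=0$: your telescoping of the service indicators into nested emptiness events (e.g.\ $\{Z_1>0,Z_3=Z_5=0\}=\{Z_3=Z_5=0\}\setminus\{Z_1=Z_3=Z_5=0\}$) via $\beta_k\phi_k(\theta)=\E[f_\theta(Z)\mathbbm{1}\{\cdot\}]$, and the collapse of the pure-$\phi$ coefficient using $\mu_1(\beta_3-\beta_1)=\mu_2(1-\beta_2)=\mu_3(\beta_5-\beta_3)=\mu_4(\beta_2-\beta_4)=\mu_5(1-\beta_5)=\alpha_1$, are precisely the intended bookkeeping. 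All your grouped coefficients check out, including $\beta_3\big(\mu_3\zeta_3(\theta)-\mu_1\zeta_1(\theta)\big)$, which also confirms that the statement's ``$\mu_1^{(r)}\zeta(\theta)$'' is a typo for $\mu_1\zeta_1(\theta)$.
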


Next, When $\theta\equiv\theta(r)$, a function of $r$, such that $|\theta|=\sum_{k=1}^5|\theta_k|\leq cr$, it is easy to see that $\gamma$ and $\zeta$ admits the following Taylor expansions as $r\to0$, that
\begin{equation}
    \label{eq:2s5c-taylor-exp}
    \gamma_1(\theta_1)=\bar\gamma_1(\theta_1)+\tilde\gamma_1(\theta_1)+o(\theta_1^2)\quad\text{and}\quad
    \zeta_k(\theta)=\bar\zeta_k(\theta_1)+\tilde\zeta_k(\theta_1)+o(|\theta|^2),
\end{equation}
where
\begin{align*}
    &\bar\gamma_1(\theta_1)=\theta_1,\quad\tilde\gamma_1(\theta_1)=\frac{1}{2}\theta_1^2,\\
    &\bar\zeta_k(\theta)=\theta_{k+1}\mathbbm1\{k<5\}-\theta_k,\quad\tilde\zeta_k(\theta)=\frac{1}{2}(\theta_{k+1}\mathbbm1\{k<5\}-\theta_k)^2,\quad k\in\{1,\ldots,5\},\\
    &\gamma_1^\ast(\theta_1)=\bar\gamma_1(\theta_1)+\tilde\gamma_1(\theta_1),\quad\zeta_k^\ast(\theta)=\bar\zeta_k(\theta)+\tilde\zeta_k(\theta),\quad k\in\{1,\ldots,5\}.
\end{align*}

Bringing this $\theta(r)$ into the MGFs of the sequence of networks indexed with $r$, we obtain the following asymptotic BAR.
\begin{lemma}[\cite{BravDaiMiya2023} Lemma 2.3]
\begin{align*}
        &\alpha_1\big(\gamma_1^\ast(\theta_1)+\sum_{k=1}^5\zeta^\ast_k(\theta)\big)\phi^{(r)}(\theta)\\
    &+\beta_1^{(r)}\mu_1^{(r)}\zeta_1^\ast(\theta)\Big(\phi^{(r)}(\theta)-\phi_1^{(r)}(\theta)\Big)+\beta_1^{(r)}\mu_4^{(r)}\zeta_4^\ast(\theta)\Big(\phi^{(r)}(\theta)-\phi_4^{(r)}(\theta)\Big)\\
    &+\beta_2^{(r)}\big(\mu_2^{(r)}\zeta_2^\ast(\theta)-\mu_4^{(r)}\zeta_4^\ast(\theta)\big)\big(\phi^{(r)}(\theta)-\phi_2^{(r)}(\theta)\big)\\
    &+\beta_3^{(r)}\big(\mu_3^{(r)}\zeta_3^\ast(\theta)-\mu_1^{(r)}\zeta^\ast_1(\theta)\big)\big(\phi^{(r)}(\theta)-\phi_3^{(r)}(\theta)\big)\\
    &+\beta_5^{(r)}\big(\mu_5^{(r)}\zeta_5^\ast(\theta)-\mu_3^{(r)}\zeta_3^\ast(\theta)\big)\big(\phi^{(r)}(\theta)-\phi_5^{(r)}(\theta)\big)=o(|\theta|^2).
\end{align*}
\end{lemma}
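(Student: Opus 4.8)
The plan is to derive the asymptotic BAR directly from the exact BAR of the preceding lemma by substituting the Taylor expansions recorded in~\eqref{eq:2s5c-taylor-exp}. Write the exact BAR schematically as $\sum_j A_j(\theta)=0$, where each summand $A_j(\theta)$ is a product of three kinds of factors: a coefficient built from $\beta_k^{(r)}$ and $\mu_k^{(r)}$, one of the maps $\gamma_1(\theta_1)$ or $\zeta_k(\theta)$ (entering linearly, possibly in a difference such as $\mu_2^{(r)}\zeta_2-\mu_4^{(r)}\zeta_4$), and either $\phi^{(r)}(\theta)$ or a difference $\phi^{(r)}(\theta)-\phi_k^{(r)}(\theta)$. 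The target quantity is $\sum_j A_j^\ast(\theta)$, obtained from $\sum_j A_j(\theta)$ by replacing every $\gamma_1$ with $\gamma_1^\ast$ and every $\zeta_k$ with $\zeta_k^\ast$. Since $\sum_j A_j(\theta)=0$ holds exactly, it suffices to show that the aggregate replacement error $\sum_j\big(A_j^\ast(\theta)-A_j(\theta)\big)$ is $o(|\theta|^2)$.

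The key step is to bound each difference $A_j^\ast-A_j$ separately and then sum the finitely many contributions. Because $\gamma_1$ and $\zeta_k$ enter each $A_j$ linearly, the replacement produces the factor $\gamma_1^\ast-\gamma_1=-o(\theta_1^2)$ or $\zeta_k^\ast-\zeta_k=-o(|\theta|^2)$ multiplied by the remaining factors. First I would observe that, since $\theta\in\R_-^5$ and $Z^{(r)}\geq0$ componentwise, every MGF satisfies $0<\phi^{(r)}(\theta),\phi_k^{(r)}(\theta)\leq1$, so $\phi^{(r)}(\theta)$ is bounded by $1$ and each difference $\phi^{(r)}(\theta)-\phi_k^{(r)}(\theta)$ is bounded by $2$, uniformly in $r$. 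Next I would note that the coefficients $\beta_k^{(r)}$ and $\mu_k^{(r)}=1/m_k^{(r)}$ converge to finite positive limits as $r\to0$ and hence are uniformly bounded on $(0,r_0)$. Multiplying an $o(|\theta|^2)$ remainder by these uniformly bounded factors keeps each $A_j^\ast-A_j$ of order $o(|\theta|^2)$, and summing over the finitely many $j$ preserves the bound.

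The main obstacle is ensuring that the $o(|\theta|^2)$ is genuinely uniform in $r$ rather than merely pointwise for each fixed $\theta$. This is exactly what forces the restriction $|\theta|=\sum_{k=1}^5|\theta_k|\leq cr$ together with the choice $\theta\equiv\theta(r)$: because $\gamma_1$ and $\zeta_k$ are fixed analytic functions not depending on $r$ (each of the form $e^{x}-1$ with $x$ linear in $\theta$), their third-order Taylor remainders are $O(|\theta|^3)=O(r^3)=o(r^2)=o(|\theta|^2)$ with a constant independent of $r$. The three ingredients above---uniform boundedness of the MGFs via $\theta\leq0$, uniform boundedness of $\beta_k^{(r)},\mu_k^{(r)}$ via their convergence, and the $r$-independence of the Taylor remainder---combine to yield a single $o(|\theta|^2)$ bound for the entire left-hand side, which is precisely the asserted asymptotic BAR.
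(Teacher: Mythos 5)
Your proposal is correct and takes essentially the same route as the paper, which obtains this lemma by substituting the Taylor expansions \eqref{eq:2s5c-taylor-exp} into the exact BAR of the preceding lemma and absorbing the remainders into $o(|\theta|^2)$ using the boundedness of the MGFs ($\theta\in\R_-^5$) and of $\beta_k^{(r)},\mu_k^{(r)}$, with the detailed bookkeeping deferred to \cite{BravDaiMiya2023}. One cosmetic point: the link $o(r^2)=o(|\theta|^2)$ in your final chain is invalid when $|\theta|\ll r$, but it is also unnecessary, since $O(|\theta|^3)=o(|\theta|^2)$ follows directly from $|\theta|\leq cr\to0$.
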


Next, we note the following SSC result.
\begin{lemma}[\cite{BravDaiMiya2023} Lemma 2.6]
    \begin{equation*}
        \lim_{r\to0}\phi^{(r)}(\theta)-\phi_k^{(r)}(\theta)=0,\quad k\in\{2,3,5\}.
    \end{equation*}
\end{lemma}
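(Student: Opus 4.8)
The plan is to convert each conditioning gap into a covariance and then exploit that, for the high-priority classes $k\in\{2,3,5\}$, the conditioning events have probability bounded away from zero. Write $B_5=\{Z_5^{(r)}=0\}$, $B_3=\{Z_3^{(r)}=Z_5^{(r)}=0\}$, and $B_2=\{Z_2^{(r)}=0\}$, so that $\phi_k^{(r)}(\theta)=\E[g_\theta(Z^{(r)})\mid B_k]$ and $\Prob(B_k)=\beta_k$. For any event $B$ with $\Prob(B)>0$ and any bounded $g$, the elementary identity
\begin{equation*}
\E[g]-\E[g\mid B]=\frac{\E[g]\,\Prob(B)-\E[g\mathbbm1_B]}{\Prob(B)}=\frac{\cov\big(g,\mathbbm1_{B^c}\big)}{\Prob(B)}
\end{equation*}
gives $\phi^{(r)}(\theta)-\phi_k^{(r)}(\theta)=\beta_k^{-1}\cov\big(g_\theta(Z^{(r)}),\mathbbm1_{B_k^c}\big)$. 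The decisive structural fact is that for $k\in\{2,3,5\}$ the excess capacities $\beta_2,\beta_3,\beta_5$ converge to strictly positive constants as $r\to0$; this is exactly why the statement is restricted to high-priority classes and would fail for $k\in\{1,4\}$, where $\beta_1^{(r)}=r$ and $\beta_4^{(r)}=r^2$ vanish and conditioning on an empty low-priority buffer radically distorts the (large) low-priority queue. It therefore suffices to show that the covariance tends to $0$.

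Next I would separate the high- and low-priority contributions to the test function. Factor $g_\theta(Z^{(r)})=g_L\,g_H$ with $g_L=\exp(\theta_1 Z_1^{(r)}+\theta_4 Z_4^{(r)})$ and $g_H=\exp(\theta_2 Z_2^{(r)}+\theta_3 Z_3^{(r)}+\theta_5 Z_5^{(r)})$. Since $\theta\in\R_-^5$ we have $0<g_L\le 1$ and $|g_H-1|\le |\theta_2|Z_2^{(r)}+|\theta_3|Z_3^{(r)}+|\theta_5|Z_5^{(r)}$. Writing $g_\theta=g_L+g_L(g_H-1)$ and using $|\cov(U,V)|\le 2\E|U|$ whenever $|V|\le 1$, the $g_L(g_H-1)$ part of the covariance is bounded by $2\E|g_H-1|\le 2\sum_{j\in\{2,3,5\}}|\theta_j|\,\E[Z_j^{(r)}]$, which is $O(r)$ times the uniformly bounded first moments supplied by \eqref{eq:2s5c-ssc}, hence $o(1)$; this is the same mechanism used in the proof of Lemma~\ref{lem:2s5c-high-ssc}. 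It then remains to prove $\cov\big(g_L,\mathbbm1_{B_k^c}\big)\to 0$.

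Conditioning on the $\sigma$-field $\call=\sigma\big(Z_1^{(r)},Z_4^{(r)}\big)$ generated by the low-priority queue lengths, $g_L$ is $\call$-measurable, so
\begin{equation*}
\cov\big(g_L,\mathbbm1_{B_k^c}\big)=\E\Big[g_L\big(\Prob(B_k^c\mid\call)-\Prob(B_k^c)\big)\Big],
\end{equation*}
which, because $0<g_L\le 1$, is bounded in absolute value by $\E\big|\Prob(B_k^c\mid\call)-\Prob(B_k^c)\big|$. This reduces the lemma to an averaging (state-space-collapse) statement: conditionally on the slowly varying low-priority queue lengths, the probability that a high-priority buffer is nonempty concentrates at its nominal value. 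I expect this concentration to be the main obstacle — it is the genuine content of the lemma and the only place where more than the first-moment bound is used. The heuristic is timescale separation: under Assumption~\ref{assumption:multi-scale-2s5c} the high-priority queues equilibrate on an $O(1)$ timescale while $Z_1^{(r)}$ and $Z_4^{(r)}$ evolve on the $O(1/r)$ and $O(1/r^2)$ diffusive scales and are effectively frozen, so $\Prob(B_k^c\mid\call)$ relaxes to the constant $\Prob(B_k^c)$ as $r\to0$. Making this rigorous is precisely what the uniform moment bounds of \eqref{eq:2s5c-ssc} and the BAR machinery of~\cite{BravDaiMiya2023} deliver, and I would invoke them here rather than reprove the averaging principle from scratch.
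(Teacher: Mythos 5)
First, a point of fair comparison: the paper does not prove this lemma at all --- it is imported verbatim from \cite{BravDaiMiya2023} (their Lemma 2.6) and used as a black box in the exponential-case argument, so there is no in-paper proof to measure your attempt against. Your reduction is correct as far as it goes. The covariance identity is right; the observation that $\beta_2^{(r)},\beta_3^{(r)},\beta_5^{(r)}$ stay bounded away from zero under Assumption~\ref{assumption:multi-scale-2s5c} (limits $1-m_2$, $m_1$, $m_1+m_3$ respectively) is the correct structural reason the lemma is restricted to $k\in\{2,3,5\}$, and your remark that it must fail for $k\in\{1,4\}$ is exactly right --- indeed \eqref{eq:2s5c-step-1} shows $\phi^{(r)}-\phi_1^{(r)}$ converges to a nonzero limit when $\eta_1\neq0$. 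The disposal of the high-priority factor $g_H$ via $|e^x-1|\le|x|$ for $x\le 0$ together with the uniform first moments from \eqref{eq:2s5c-ssc} is the same mechanism the paper itself uses to prove Lemma~\ref{lem:2s5c-high-ssc} and Proposition~\ref{prop:2s5c-ssc-low}, and all of your elementary inequalities (including $|\cov(U,V)|\le 2\E|U|$ for $|V|\le1$) check out.

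The gap is the last step, and it is the entire content of the lemma. You reduce the claim to $\E\big|\Prob(B_k^c\mid\call)-\Prob(B_k^c)\big|\to0$ with $\call=\sigma(Z_1^{(r)},Z_4^{(r)})$ --- an asymptotic-independence statement between the scaled low-priority queue lengths and the high-priority emptiness events --- and then assert that ``the uniform moment bounds of \eqref{eq:2s5c-ssc} and the BAR machinery of \cite{BravDaiMiya2023} deliver'' it. They do not, at least not in the way stated: \eqref{eq:2s5c-ssc} controls moments of $Z_2^{(r)}+Z_3^{(r)}+Z_5^{(r)}$ and hence gives tightness of the high-priority queues, but no moment bound can by itself force $L^1$-concentration of the conditional probability $\Prob(B_k^c\mid\call)$ at its mean; uniformly bounded high-priority queues could in principle remain strongly correlated with $(rZ_1^{(r)},r^2Z_4^{(r)})$, and ruling this out is precisely the decoupling that \cite{BravDaiMiya2023} must establish by a dedicated argument (their proof works through the BAR with specific test functions, not through a generic timescale-separation or averaging principle, which is nowhere made rigorous in your write-up). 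Worse, since the statement you are proving \emph{is} \cite{BravDaiMiya2023} Lemma 2.6, invoking that paper's machinery for the crux makes the proposal circular: you have correctly located the hard kernel of the lemma and then cited the lemma's own source for it. As a blind attempt this is a clean and honest reduction, and the easy half genuinely mirrors the paper's neighboring SSC proofs, but it is not a proof of the statement.
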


Brining this SSC result into the asymptotic BAR, we can further simplify the asymptotic BAR to the following.
\begin{align*}
    &\alpha_1\big(\tilde\gamma_1(\theta_1)+\sum_{k=1}^5\tilde\zeta_k(\theta)\big)\phi^{(r)}(\theta)\\
    &+\beta_1^{(r)}\mu_1^{(r)}\bar\zeta_1(\theta)\Big(\phi^{(r)}(\theta)-\phi_1^{(r)}(\theta)\Big)+\beta_1^{(r)}\mu_4^{(r)}\bar\zeta_4(\theta)\Big(\phi^{(r)}(\theta)-\phi_4^{(r)}(\theta)\Big)\\
    &+\beta_2^{(r)}\big(\mu_2^{(r)}\bar\zeta_2(\theta)-\mu_4^{(r)}\bar\zeta_4(\theta)\big)\big(\phi^{(r)}(\theta)-\phi_2^{(r)}(\theta)\big)\\
    &+\beta_3^{(r)}\big(\mu_3^{(r)}\bar\zeta_3(\theta)-\mu_1^{(r)}\bar\zeta_1(\theta)\big)\big(\phi^{(r)}(\theta)-\phi_3^{(r)}(\theta)\big)\\
    &+\beta_5^{(r)}\big(\mu_5^{(r)}\bar\zeta_5(\theta)-\mu_3^{(r)}\bar\zeta_3(\theta)\big)\big(\phi^{(r)}(\theta)-\phi_5^{(r)}(\theta)\big)=o(|\theta|^2).
\end{align*}

Established upon the asymptotic BAR and SSC results, we now are ready to prove Proposition~\ref{prop:2s5c-result} under exponential interarrival and service times. 
\begin{proof}
The proof of the desired product-form limit follows an inductive argument, with the following three equations as key intermediary steps: for a given $\eta\in\R_-^5$, we have
\begin{align}
    & \lim_{r\to0}\phi^{(r)}(r\eta_1,0,0,r^2\eta_4,0)-\frac{1}{1-d_1\eta_1}\phi_1^{(r)}(0,0,0,r^2\eta_4,0)=0\label{eq:2s5c-step-1}\\
    & \lim_{r\to0}\phi^{(r)}(0,0,0,r^2\eta_4,0)-\phi_1^{(r)}(0,0,0,r^2\eta_4,0)=0\label{eq:2s5c-step-2}\\
    &\lim_{r\to0}\phi^{(r)}(0,0,0,r^2\eta_4,0)-\frac{1}{1-d_4\eta_4}=0\label{eq:2s5c-step-3}.
\end{align}
Taking these equations as given, we proceed inductively:
\begin{align*}
    \lim_{r\to0}\phi^{(r)}(r\eta_1,r\eta_2,r\eta_3,r^2\eta_4,r\eta_5)&=\lim_{r\to0}\phi^{(r)}(r\eta_1,0,0,r^2\eta_4,0) &&\text{Lemma~\ref{lem:2s5c-high-ssc}}\\
    &=\frac{1}{1-d_1\eta_1}\lim_{r\to0}\phi_1^{(r)}(0,0,0,r^2\eta_4,0)&&\eqref{eq:2s5c-step-1}\\
    &=\frac{1}{1-d_1\eta_1}\lim_{r\to0}\phi^{(r)}(0,0,0,r^2\eta_4,0)&&\eqref{eq:2s5c-step-2}\\
    &=\frac{1}{1-d_1\eta_1}\frac{1}{1-d_4\eta_4}.&&\eqref{eq:2s5c-step-3}
\end{align*}
As such, we have proven the desired limit in~\eqref{eq:2s5c-limit-ssc}.

To complete the proof, what remains is to prove~\eqref{eq:2s5c-step-1}--\eqref{eq:2s5c-step-3}. The proofs of the three equations are centered around carefully choosing different $\theta$ as functions of fixed $(\eta_1,\eta_4)\in\R_-^2$ and $r\in(0,1)$.

Starting with~\eqref{eq:2s5c-step-1}, we first choose the following $\theta$,
\begin{equation}
\label{eq:2s5c-theta-example}
   \begin{aligned}
        \theta(\eta,r)=\Big(r\eta_1+\frac{1}{m_2}r^2\eta_4&,\frac{m_3-m_5\frac{m_2}{m_4}}{m_1+m_3-m_5\frac{m_2}{m_4}}r\eta_1+\frac{1}{m_2}r^2\eta_2,
        \frac{m_3}{m_3-m_5\frac{m_2}{m_4}}r\eta_1+r^2\eta_4,\\
        r^2\eta_4&,\frac{m_5}{m_3-m_5\frac{m_2}{m_4}}r\eta_1+r^2\eta_4\Big).
    \end{aligned}
\end{equation}

For simplicity, in this proof, we assume that $m_3-m_5\frac{m_2}{m_4}\geq0$, though this assumption is not required. When $m_3-m_5\frac{m_2}{m_4}<0$, we can employ the truncation technique to address this issue, as discussed in detail in~\cite{BravDaiMiya2023} and~\cite{DaiHuo2024}.

    One key aspect of choosing $\theta$ strategically is to establish the following relationships,
    \begin{equation*}
        \mu_3^{(r)}\bar\zeta_3(\theta)-\mu_1^{(r)}\bar\zeta_1(\theta)=
        \mu_5^{(r)}\bar\zeta_5(\theta)-\mu_3^{(r)}\bar\zeta_3(\theta)=
        \mu_2^{(r)}\bar\zeta_2(\theta)-\mu_4^{(r)}\bar\zeta_4(\theta)=0,
    \end{equation*}
    which are satisfied under our choice of $\theta$ in~\eqref{eq:2s5c-theta-example}. 
    Substituting this $\theta$ into the asymptotic BAR~\eqref{eq:2s5c-asymptotic-bar} and making use of the multi-scale heavy traffic condition in Assumption~\ref{assumption:multi-scale-2s5c}, i.e., $\beta_1^{(r)}=r$ and $\beta_4^{(r)}=r^2$, we obtain
\begin{align*}
    &\alpha_1\big(\tilde\gamma_1(\theta_1)+\sum_{k=1}^5\tilde\zeta_k(\theta)\big)\phi(\theta)\\
    &+r\mu_1^{(r)}\Big(r\eta_1+\frac{1}{m_4}r^2\eta_4\Big)\Big(\phi(\theta)-\phi_1(\theta)\Big)
    +r^2\mu_4^{(r)}\Big(r^2\eta_4\Big)\Big(\phi(\theta)-\phi_4(\theta)\Big)=o(r^2),
\end{align*}
and we note that $\alpha_1\big(\tilde\gamma_1(\theta_1)+\sum_{k=1}^5\tilde\zeta_k(\theta)\big)=O(r^2)$ for $|\theta|=O(r)$ by construction in~\eqref{eq:2s5c-theta-example}.
Thus, we divide $r^2$ on both sides and as $r\to0$, we have
\begin{align*}
    &\lim_{r\to0}\frac{1}{r^2}\Big(\alpha_1\big(\tilde\gamma_1(\theta_1)+\sum_{k=1}^5\tilde\zeta_k(\theta)\big)\phi(\theta)\\
    &\qquad\qquad
    +r\mu_1^{(r)}\Big(r\eta_1+\frac{1}{m_4}r^2\eta_4\Big)\Big(\phi(\theta)-\phi_1(\theta)\Big)
    +r^2\mu_4^{(r)}\Big(r^2\eta_4\Big)\Big(\phi(\theta)-\phi_4(\theta)\Big)\Big)\\
    &=\lim_{r\to0}\frac{1}{r^2}\Big(\alpha_1\big(\tilde\gamma_1(\theta_1)+\sum_{k=1}^5\tilde\zeta_k(\theta)\big)\Big)\phi(\theta) + \mu_1^{(r)}\eta_1\Big(\phi(\theta)-\phi_1(\theta)\Big)
    =0.
\end{align*}
Simplifying the above equation, we arrive at the following equation, with $d_1$ defined in~\ref{eq:2s5c-d1},
\begin{equation}
\label{eq:2s5c-ssc-step-1}
    \lim_{r\to0}\phi^{(r)}(\theta)-\frac{1}{1-d_1\eta_1}\phi_1^{(r)}(\theta)=0.
\end{equation}

By the SSC results in Lemma~\ref{lem:2s5c-high-ssc} and Proposition~\ref{prop:2s5c-ssc-low}, we have
\begin{equation*}
    \lim_{r\to0}\phi^{(r)}(r\eta_1,0,0,r^2\eta_4,0)-\phi^{(r)}(\theta)=0\quad\text{and}\quad
    \lim_{r\to0}\phi_1^{(r)}(0,0,0,r^2\eta_4,0)-\phi_1^{(r)}(\theta)=0.
\end{equation*}
Therefore, substituting the above SSC results back into~\eqref{eq:2s5c-ssc-step-1}, we have proved~\eqref{eq:2s5c-step-1},
\begin{equation*}
    \lim_{r\to0}\phi^{(r)}(r\eta_1,0,0,r^2\eta_4,0)-\frac{1}{1-d_1\eta_1}\phi_1^{(r)}(0,0,0,r^2\eta_4,0)=0.
\end{equation*}

It is easy to prove~\eqref{eq:2s5c-step-2}, simply by setting $\eta_1=0$ in~\eqref{eq:2s5c-step-1}, and hence, we have
\begin{equation*}
    \lim_{r\to0}\phi^{(r)}(0,0,0,r^2\eta_4,0)-\phi_1^{(r)}(0,0,0,r^2\eta_4,0)=0.
\end{equation*}

Lastly, to prove~\eqref{eq:2s5c-step-3}, we consider the following choice of $\theta$,
\begin{equation*}
        \theta(\eta,r)=\Big(\frac{1}{m_4}r^2\eta_4, \frac{1}{m_4}r^2\eta_4, r^2\eta_4, r^2\eta_4, 0\Big).
    \end{equation*}
    Given this choice, we see that 
    \begin{equation*}
        \bar\zeta_1(\theta)=\bar\zeta_3(\theta)=\bar\zeta_5(\theta)=
        \mu_2^{(r)}\bar\zeta_2(\theta)-\mu_4^{(r)}\bar\zeta_4(\theta)=0.
    \end{equation*}
    Substituting this choice of $\theta$ into the asymptotic BAR, and following a similar simplification strategy as the previous steps, we obtain 
    \begin{equation*}
        \lim_{r\to0}\phi^{(r)}(\theta)-\frac{1}{1-d_4\eta_4}\phi_2^{(r)}(\theta)=0.
    \end{equation*}
Together with SSC results, we have established~\eqref{eq:2s5c-step-3}
\begin{equation*}
    \lim_{r\to0}\phi^{(r)}(0,0,0,r^2\eta_4,0)-\frac{1}{1-d_4\eta_4}=0.
\end{equation*}

As such, we have completed the proof of Proposition~\ref{prop:2s5c-result}. 
\end{proof}

\subsection{General Distribution}
\label{sec:general-proof}

Now we extend the above analysis to general interarrival and service time distributions, with a particular focus on bounded distributions. For discussions on handling unbounded distributions, we refer readers to~\cite{BravDaiMiya2023} and~\cite{DaiHuo2024}.
We shall see that once we have proved the exponential case, handling general distributions is not that much different.

With interarrival and service times now generally distributed, we turn our attention to the Markov process $\{X(t),t\geq0\}$ as defined in~\eqref{eq:2s5c-state}. The BAR characterizing the stationary distribution of the Markov process is provided in the following lemma.

\begin{lemma}[\cite{BravDaiMiya2023} Lemma 2.13]
    Let the random vector $X$ in~\eqref{eq:2s5c-state} denote the pre-jump state and $\Delta$ represent the increments corresponding to different jump events, i.e., external job arrival or service completion,
\begin{align*}
    &\Delta_{e,1}\equiv\Big(e^{(1)},e^{(1)}T_{e,1}/\alpha_1,0\Big)\\
    \text{and}\quad&\Delta_{s,k}\equiv\Big(-e^{(k)}+e^{(k+1)}\mathbbm1\{k<5\},0,e^{(k)}T_{s,k}/\mu_k\Big),\quad k=1,\ldots,5.
\end{align*} 
Together, these random vectors satisfy the following BAR with some ``nice'' functions $f$.
\begin{equation}
\label{eq:2s5c-bar-appendix}
    -\E[\cala f(X)] = \alpha_1\Big(\E_{e,1}[f(X+\Delta_{e,1})-f(X)]+\sum_{k=1}^5\E_{s,k}[f(X+\Delta_{s,k})-f(X)]\Big),
\end{equation}
where $\E_{e,1}[\cdot]$ and $\E_{s,k}[\cdot]$ are Palm expectations.
For $x=(z,u_1,v)$ and function $f$, the operator $\cala$ is defined as
\begin{align*}
    \cala f(x)=
    -\frac{\partial f}{\partial u_1}(x)&-\frac{\partial f}{\partial v_1}(x)\mathbbm{1}\{z_1>0, z_3=z_5=0\}-\frac{\partial f}{\partial v_3}(x)\mathbbm{1}\{z_3>0, z_5=0\}-\frac{\partial f}{\partial v_5}(x)\\
    &-\frac{\partial f}{\partial v_2}(x)-\frac{\partial f}{\partial v_4}(x)\mathbbm{1}\{z_4>0, z_2=0\}.
\end{align*}

\end{lemma}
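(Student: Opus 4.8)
The plan is to derive the BAR from the piecewise-deterministic structure of $\{X(t),t\ge0\}$ together with Palm calculus, following the general program of \cite{BravDaiMiya2023}. First I would identify $\{X(t),t\ge0\}$ as a piecewise-deterministic Markov process: between successive jump epochs the discrete queue-length vector $Z$ is frozen, the residual interarrival clock $R_{e,1}$ and the residual service clock of the class currently in service decrease at unit rate, and the remaining coordinates of $R_s$ stay put. Reading off these deterministic dynamics, and using the SBP priorities in~\eqref{eq:2s5c-pri} to decide which service clock is active (class $1$ only when $z_3=z_5=0$, class $3$ only when $z_5=0$, class $4$ only when $z_2=0$), one checks that the generator of the deterministic flow acting on a smooth $f$ is exactly the operator $\cala$ displayed in the lemma, so that along the flow $\tfrac{d}{dt}f(X(t))=\cala f(X(t))$.

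Next I would write the sample-path (Dynkin-type) identity
\begin{equation*}
    f(X(t))-f(X(0))=\int_0^t\cala f(X(s))\,ds+\sum_{0<s\le t}\big(f(X(s))-f(X(s^-))\big),
\end{equation*}
where the sum runs over the jump epochs, which are precisely the external-arrival epochs (increment $\Delta_{e,1}$) and the class-$k$ service-completion epochs (increment $\Delta_{s,k}$). Taking expectations under the stationary law makes the left-hand side vanish for every $t$; dividing by $t$ and letting $t\to\infty$, the time average of the drift integral converges to $\E[\cala f(X)]$, while the ergodic theorem converts the jump sum into $\sum_i \lambda_i\,\E_i[f(X+\Delta_i)-f(X)]$, where $\lambda_i$ is the long-run intensity of the $i$-th jump process and $\E_i$ the corresponding Palm expectation.

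The final ingredient is to identify all the intensities. The external arrivals have rate $\alpha_1$ by construction, and because every job traverses each of the five classes exactly once, flow conservation in steady state (a consequence of positive Harris recurrence under~\eqref{eq:2s5c-rho} and~\eqref{eq:2s5c-virtual-rho}) forces the throughput, hence the completion intensity, of each class to equal the arrival rate $\alpha_1$ as well. Factoring this common $\alpha_1$ out of the jump terms and rearranging $\E[\cala f(X)]+\alpha_1\sum_i\E_i[f(X+\Delta_i)-f(X)]=0$ yields the stated BAR~\eqref{eq:2s5c-bar-appendix}.

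I expect the main obstacle to be the rigorous justification of the limiting interchange and of the Palm representation, rather than the algebra: one must restrict to a class of ``nice'' test functions for which $\E[|\cala f(X)|]<\infty$ and the Palm expectations $\E_i[|f(X+\Delta_i)-f(X)|]$ are finite, and then argue that the compensated jump sum is a genuine (not merely local) martingale so that stationarity really does annihilate the left-hand side. Here the moment hypothesis in Assumption~\ref{assumption:2s5c-t-moment} on the interarrival and service times, together with the independence of each freshly sampled service time $T_{s,k}$ from the pre-jump state at a completion epoch, supplies the integrability needed to control the upward jumps $e^{(k)}T_{s,k}/\mu_k$ in $\Delta_{s,k}$. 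A secondary technical point is verifying that the state-dependent, preemptive-resume switching of the active service clock introduces no additional boundary jumps beyond those listed, which is exactly what the indicator structure of $\cala$ encodes.
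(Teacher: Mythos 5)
Your sketch is correct and follows essentially the same route as the source the paper relies on for this statement---the paper itself gives no proof, quoting the lemma verbatim from \cite{BravDaiMiya2023} (Lemma 2.13)---namely: read off the piecewise-deterministic dynamics to identify $\cala$ as the generator of the interjump flow, apply a Dynkin/rate-conservation identity under the stationary law, represent the jump terms via Palm expectations, and use flow balance under positive Harris recurrence to identify every service-completion intensity with the arrival rate $\alpha_1$, with the integrability and martingale issues you flag being exactly the technical content handled there by restricting to ``nice'' test functions. One reconciliation point: under the paper's stated convention that $R_{s,k}$ is frozen whenever class $k$ is not in service, the flow derivative you describe (``the residual service clock of the class currently in service decreases at unit rate'') carries indicators $\mathbbm{1}\{z_5>0\}$ and $\mathbbm{1}\{z_2>0\}$ on the $v_5$ and $v_2$ terms, which are missing from the displayed $\cala$; this is evidently a typo in the lemma's display rather than a gap in your argument, since your account of the active clocks is the correct one.
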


Working with the BAR in~\eqref{eq:2s5c-bar-appendix}, we consider the following bounded test functions with $x=(z,u_1,v)$,
\begin{align}
    f_{\theta}(x)&=\exp\Big(\sum_{k=1}^5\theta_kz_k\Big)\exp\Big(-\gamma_1(\theta)(\alpha_1 u_1) + \sum_{k=1}^5\zeta_k(\theta)(\mu_kv_k)\Big).
\end{align}
The functions $\gamma_1(\theta,t)$ and $\zeta(\theta,t)$ chosen to satisfy
\begin{equation}
    e^{\theta_1}\E\Big[e^{-\gamma_1(\theta_1)(T_{e,1})}\Big]=1\quad
    \text{and}\quad e^{-\theta_k+\theta_{k+1}\mathbbm1\{k<5\}}\E\Big[e^{-\zeta_k(\theta)(T_{s,k})}\Big]=1,\quad k\in\{1,\ldots,5\}.\label{eq:general-gamma-def}
\end{equation}
It has been proved in~\cite{BravDaiMiya2017} that such $\gamma$ and $\zeta$ are well defined.
In fact, the $\gamma$ and $\zeta$ notations are intentionally resued, as we shall see their connections to previous definitions in the following Taylor expansions in Lemma~\ref{lem:2s5c-taylor}.

Define
\begin{align}
    &\bar\gamma_1(\theta_1)=\theta_1,\quad\tilde\gamma_1(\theta_1)=\frac{1}{2}c_{e,1}^2\theta_1^2,\label{eq:general-taylor-1}\\
    &\bar\zeta_k(\theta)=\theta_{k+1}\mathbbm1\{k<5\}-\theta_k,\quad\tilde\zeta_k(\theta)=\frac{1}{2}c_{s,k}^2(\theta_{k+1}\mathbbm1\{k<5\}-\theta_k)^2,\quad k\in\{1,\ldots,5\},\\
    &\gamma_1^\ast(\theta_1)=\bar\gamma_1(\theta_1)+\tilde\gamma_1(\theta_1),\quad\zeta_k^\ast(\theta)=\bar\zeta_k(\theta)+\tilde\zeta_k(\theta),\quad k\in\{1,\ldots,5\},\label{eq:general-taylor-3}
\end{align}
where $c_{e,1}^2$ and $c_{s,k}^2$ are defined in~\eqref{eq:ce-def} and~\eqref{eq:cs-def}.
\begin{lemma}[\cite{DaiGlynXu2023} Lemma 5.3]
\label{lem:2s5c-taylor}
Let $\theta(r)\in\R^5$ be given for each $r\in(0,1)$, satisfying $|\theta(r)|\leq cr$. Denoting $\theta=\theta(r)$, as $r\to0$, we have
    \begin{align*}
    &\gamma_1(\theta_1)=\gamma_1^\ast(\theta_1)+o(r^2\theta_1)+o(\theta_1^2),\\
    &\zeta_k(\theta)=\zeta_k^\ast(\theta)+o(r^2|\theta|)+o(|\theta|^2),\quad k\in\{1,\ldots,5\}.
\end{align*}
\end{lemma}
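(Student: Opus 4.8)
The plan is to treat both defining relations in \eqref{eq:general-gamma-def} as instances of a single scalar fixed-point problem and to obtain the expansions by inverting a Taylor expansion of a Laplace transform. The key structural observation is that $\gamma_1$ depends on $\theta$ only through $\theta_1$, while each $\zeta_k$ depends on $\theta$ only through the scalar $b_k:=\theta_{k+1}\mathbbm1\{k<5\}-\theta_k=\bar\zeta_k(\theta)$. Hence it suffices to study, for a nonnegative unitized random variable $S$ playing the role of $T_{e,1}$ or $T_{s,k}$ (so $\E[S]=1$, $\Var(S)=c^2$, and $\E[S^{3+\delta_0}]<\infty$ by Assumption~\ref{assumption:2s5c-t-moment}), and for a small real parameter $b$, the single equation $e^{b}\,\E[e^{-yS}]=1$, and to expand its root $y=y(b)$ about $b=0$.

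First I would record the near-origin expansion of the cumulant functional $\Psi(y):=\log\E[e^{-yS}]$. Using Taylor's theorem together with $\E[S]=1$ and $\E[S^2]=1+c^2$, one gets $\E[e^{-yS}]=1-y+\tfrac12(1+c^2)y^2+o(y^2)$, and after taking logarithms the linear-squared term cancels to leave $\Psi(y)=-y+\tfrac12 c^2 y^2+o(y^2)$. The defining equation $b+\Psi(y)=0$ then reads $y=b+\tfrac12 c^2 y^2+o(y^2)$. Since $\Psi$ is differentiable at $0$ with $\Psi'(0)=-1\neq0$, the well-posedness established in \cite{BravDaiMiya2017} (or the implicit function theorem) supplies a unique small root with $y(0)=0$ and $y(b)=b+O(b^2)$; substituting this leading behaviour back and bootstrapping once yields $y(b)=b+\tfrac12 c^2 b^2+o(b^2)$. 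Reading off the linear and quadratic coefficients reproduces exactly the quantities in \eqref{eq:general-taylor-1}--\eqref{eq:general-taylor-3}: with $b=\theta_1$, $c^2=c_{e,1}^2$ we obtain $\gamma_1(\theta_1)=\gamma_1^\ast(\theta_1)+o(\theta_1^2)$, and with $b=b_k$, $c^2=c_{s,k}^2$ we obtain $\zeta_k(\theta)=\zeta_k^\ast(\theta)+o(b_k^2)$.

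Finally I would convert these remainders into the form stated in the lemma using the hypothesis $|\theta(r)|\le cr$, under which $|b_k|\le 2|\theta|\le 2cr\to0$. Because $|\theta|\to0$, the cubic-and-higher tail satisfies $O(|\theta|^3)=o(|\theta|^2)$, so $o(b_k^2)$ is absorbed into $o(|\theta|^2)$, and the same comparison shows the error is of the looser form $o(r^2|\theta|)+o(|\theta|^2)$ that the lemma records for compatibility with the general statement of \cite{DaiGlynXu2023}.

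I expect the remainder control in the second step to be the main obstacle. For bounded distributions—the case of primary interest here—the transform is analytic and the $o(y^2)$ control is immediate, but in general only $3+\delta_0$ moments are assumed, so one cannot invoke analyticity and must justify the $o(y^2)$ remainder directly, for example by expanding $e^{-yS}-1+yS-\tfrac12 y^2S^2$ and using dominated convergence/uniform integrability granted by the $(3+\delta_0)$-moment bound. The delicate point is then to propagate this control through the inversion of $y=b+\tfrac12 c^2y^2+o(y^2)$ so that all $o$-terms remain uniform along the sequence $\theta=\theta(r)$ as $r\to0$, which is precisely what the asymptotic BAR manipulation later requires after division by $r^2$.
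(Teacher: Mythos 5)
The paper does not prove this lemma at all---it is imported verbatim from \cite{DaiGlynXu2023} (with well-definedness of $\gamma_1,\zeta_k$ delegated to \cite{BravDaiMiya2017})---so there is no in-paper proof to match; your reconstruction follows the same route as the cited sources. Your reduction to the scalar fixed point $e^{b}\,\E[e^{-yS}]=1$ with $b=\theta_1$ or $b=\bar\zeta_k(\theta)$ is exactly right, the logarithmic cancellation giving $\Psi(y)=-y+\tfrac12 c^{2}y^{2}+o(y^{2})$ is correct, and the bootstrap $y(b)=b+\tfrac12 c^{2}b^{2}+o(b^{2})$ recovers \eqref{eq:general-taylor-1}--\eqref{eq:general-taylor-3}, with $|b_k|\le 2|\theta|$ absorbing the remainder into $o(|\theta|^{2})$. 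You are also right that in this paper's setting the $o(r^{2}|\theta|)$ term is pure slack: it matters in \cite{DaiGlynXu2023} only because there the expansion must track $r$-dependent perturbations, whereas here the unitized $T_{e,1},T_{s,k}$ are fixed. For the same reason your closing worry about uniformity along $\theta(r)$ is not actually delicate: the remainder is a fixed function $\rho(b)$ with $\rho(b)/b^{2}\to0$, and evaluating it along any sequence $b(r)\to0$ is automatic; uniformity would only be an issue if the distributions varied with $r$.

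The one point where your proposed remedy would fail is the unbounded case, and the failure is more basic than remainder control. For $\theta\in\R^{5}$ the scalar $b$ can be positive, which forces the root $y<0$... more precisely, when $b<0$ the root satisfies $y<0$, so the defining expectation is $\E[e^{|y|S}]$, which can be \emph{infinite for every} $|y|>0$ under only $\E[S^{3+\delta_0}]<\infty$ (e.g.\ Pareto-type tails): dominated convergence or uniform integrability from the $(3+\delta_0)$-moment bound cannot rescue an expectation that does not exist, so the fixed point itself may be undefined, not merely poorly expanded. This is precisely why the paper's general-distribution section restricts attention to bounded distributions (where your Lagrange-remainder bound $|e^{-yS}-1+yS-\tfrac12 y^{2}S^{2}|\le \tfrac{1}{6}|y|^{3}S^{3}e^{|y|S}$ closes the argument) and why \cite{BravDaiMiya2023} and \cite{DaiHuo2024} handle unbounded supports by truncating the interarrival and service distributions at an $r$-dependent level and propagating the truncation error through the BAR, rather than by a direct moment argument. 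Within the scope the paper actually works in, your proof is complete and correct.
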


Given these bounded test functions, we define the MGF $\psi^{(r)}(\theta)=\E[f_\theta(X^{(r)})],$
and similarly the conditional MGFs
\begin{align*}
    &\psi^{(r)}_1(\theta)=\E\Big[f_\theta(X^{(r)})|Z^{(r)}_1=0, Z^{(r)}_3=0, Z^{(r)}_5=0\Big],\\
    &\psi^{(r)}_3(\theta)=\E\Big[f_\theta(X^{(r)})| Z^{(r)}_3=0, Z^{(r)}_5=0\Big],\quad
    \psi^{(r)}_5(\theta)=\E\Big[f_\theta(X^{(r)})| Z^{(r)}_5=0\Big],\\
    &\psi_4^{(r)}(\theta)=\E\Big[f_\theta(X^{(r)})| Z_2^{(r)}=0, Z_4^{(r)}=0\Big],\quad
    \psi^{(r)}_2(\theta)=\E\Big[f_\theta(X^{(r)})| Z_2^{(r)}=0\Big].
\end{align*}

In fact, the MGFs $\psi^{(r)}$ for general distributions and MGFs for exponential distributions $\phi^{(r)}$ do not differ, as quantified precisely in the following lemma.
\begin{lemma}[\cite{BravDaiMiya2023} Lemma 2.12]
As $r\to0$,
    \begin{equation*}
        \phi^{(r)}(\theta)-\psi^{(r)}(\theta)=o(1)\quad\text{and}\quad \phi_k^{(r)}(\theta)-\psi_k^{(r)}(\theta)=o(1),\quad k=1,\ldots,5.
    \end{equation*}
\end{lemma}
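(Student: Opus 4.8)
The plan is to collapse the difference of the two moment generating functions into a single expectation that isolates the residual-time factors, and then to show that this factor is uniformly close to $1$ throughout the relevant scaling regime $|\theta|\le cr$. Writing $X^{(r)}=(Z^{(r)},R_{e,1},R_s^{(r)})$ and recalling that $f_\theta(x)=g_\theta(z)\exp\big(-\gamma_1(\theta)\alpha_1 u_1+\sum_{k=1}^5\zeta_k(\theta)\mu_k v_k\big)$, I would set
\[
W^{(r)}=-\gamma_1(\theta)\,\alpha_1 R_{e,1}+\sum_{k=1}^5\zeta_k(\theta)\,\mu_k R_{s,k}^{(r)},
\]
so that $f_\theta(X^{(r)})=g_\theta(Z^{(r)})\,e^{W^{(r)}}$ and hence
\[
\phi^{(r)}(\theta)-\psi^{(r)}(\theta)=\E\big[g_\theta(Z^{(r)})\,(1-e^{W^{(r)}})\big].
\]
Since $\theta\in\R_-^5$ and $Z^{(r)}\ge0$, the factor $g_\theta(Z^{(r)})\in(0,1]$, so it suffices to control $\E\big[\,|1-e^{W^{(r)}}|\,\big]$.

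Next I would estimate $W^{(r)}$. By the Taylor expansions in Lemma~\ref{lem:2s5c-taylor}, along $|\theta|\le cr$ one has $\gamma_1(\theta_1)=O(r)$ and $\zeta_k(\theta)=O(r)$, these coefficients being deterministic. For bounded interarrival and service distributions the stationary residuals $R_{e,1}$ and $R_{s,k}^{(r)}$ are bounded by some constant $M$ (and the rates $\mu_k$, $\alpha_1$ are $O(1)$), whence $|W^{(r)}|\le O(r)\cdot M=O(r)$ uniformly. Using the elementary inequality $|1-e^{w}|\le |w|e^{|w|}$ then gives
\[
\big|\phi^{(r)}(\theta)-\psi^{(r)}(\theta)\big|\le \E\big[|W^{(r)}|e^{|W^{(r)}|}\big]=O(r)\,e^{O(r)}\longrightarrow 0,
\]
which is the claimed $o(1)$ bound; equivalently one may invoke dominated convergence, since $W^{(r)}\to0$ pointwise with a bounded dominating exponential.

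For the conditional statements I would divide by the conditioning probability, and the key observation is that the numerator $\E\big[|1-e^{W^{(r)}}|\,\mathbbm1\{\cdot\}\big]$ is itself proportional to $\Prob(\cdot)$: because $|W^{(r)}|e^{|W^{(r)}|}$ is bounded by a deterministic $O(r)$, the numerator is $\le O(r)\,\Prob(\cdot)$, so the conditioning probability cancels and leaves $O(r)\to0$. This cancellation is what matters, since probabilities such as $\beta_1^{(r)}=r$ and $\beta_4^{(r)}=r^2$ vanish in the limit, yet the matching $\Prob(\cdot)$ in the numerator compensates exactly.

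The main obstacle is the extension to unbounded distributions, which the lemma covers in full via~\cite{BravDaiMiya2023}. There the residuals are no longer bounded, so $e^{|W^{(r)}|}$ and the residual factors cannot simply be pulled out as constants. Instead I would relate the stationary residual-time laws to the equilibrium (stationary-excess) distributions and use Assumption~\ref{assumption:2s5c-t-moment} to secure $(2+\delta_0)$-moments of the residuals; the conditional versions then require a H\"older split $\E[(\text{residual})\,\mathbbm1\{\cdot\}]\le \E[(\text{residual})^p]^{1/p}\,\Prob(\cdot)^{1-1/p}$, and after dividing by $\Prob(\cdot)\in\{r,r^2\}$ one must verify that the surviving power of $r$ remains positive—precisely where the moment exponent $2+\delta_0$ is needed. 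Controlling the integrability of the exponential factor $e^{|W^{(r)}|}$ against these unbounded residuals is the delicate step, and is exactly where the boundedness assumption genuinely simplifies the argument.
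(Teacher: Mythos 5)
The paper offers no proof of this lemma at all---it is quoted verbatim from \cite{BravDaiMiya2023} (their Lemma 2.12)---so there is nothing in-paper to compare against; judged on its own, your argument is correct in the bounded-distribution setting that Section 5.2 explicitly adopts, and it is essentially the standard argument behind the cited result: factor $f_\theta(X^{(r)})=g_\theta(Z^{(r)})e^{W^{(r)}}$, use $g_\theta(Z^{(r)})\in(0,1]$ for $\theta\in\R_-^5$, note that $\gamma_1(\theta_1)=O(r)$ and $\zeta_k(\theta)=O(r)$ on $|\theta|\le cr$ so that $|1-e^{W^{(r)}}|\le |W^{(r)}|e^{|W^{(r)}|}=O(r)$ holds \emph{deterministically}, and observe that this uniform bound makes the conditional versions immediate, since $\E[|1-e^{W^{(r)}}|\mathbbm1\{\cdot\}]\le O(r)\,\Prob(\cdot)$ cancels the conditioning probability exactly---an important point given that $\beta_1^{(r)}=r$ and $\beta_4^{(r)}=r^2$ vanish. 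One clarification on your final paragraph: the unbounded case in \cite{BravDaiMiya2023} is not handled by your moment/H\"older route (equilibrium-distribution moments of the residuals do not tame the exponential factor $e^{|W^{(r)}|}$, which is precisely the obstruction you identify); instead they truncate the primitive interarrival and service time distributions and control the perturbation of the stationary distributions in the truncation level. Since this companion paper restricts attention to bounded distributions and defers the unbounded case to the references, your proof suffices for the lemma as used here.
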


Following the analysis in \cite{BravDaiMiya2023} and the Taylor expansion, we have the following asymptotic BAR.
\begin{lemma}
For $\theta\in\R_-^5$ satisfying $|\theta(r)|\leq cr$, we have
\begin{equation}
\label{eq:2s5c-asymptotic-bar}
    \begin{aligned}
        &\alpha_1\big(\tilde\gamma_1(\theta_1)+\sum_{k=1}^5\tilde\zeta_k(\theta)\big)\psi(\theta)\\
    &+\beta_1^{(r)}\mu_1^{(r)}\bar\zeta_1(\theta)\Big(\psi(\theta)-\psi_1(\theta)\Big)+\beta_4^{(r)}\mu_4^{(r)}\bar\zeta_4(\theta)\Big(\psi(\theta)-\psi_4(\theta)\Big)\\
    &+\beta_2^{(r)}\big(\mu_2^{(r)}\bar\zeta_2(\theta)-\mu_4^{(r)}\bar\zeta_4(\theta)\big)\big(\psi(\theta)-\psi_2(\theta)\big)\\
    &+\beta_3^{(r)}\big(\mu_3^{(r)}\bar\zeta_3(\theta)-\mu_1^{(r)}\bar\zeta(\theta)\big)\big(\psi(\theta)-\psi_3(\theta)\big)\\
    &+\beta_5^{(r)}\big(\mu_5^{(r)}\bar\zeta_5(\theta)-\mu_3^{(r)}\bar\zeta_3(\theta)\big)\big(\psi(\theta)-\psi_5(\theta)\big)=o(r^2|\theta|) + o(|\theta|^2).
    \end{aligned}
\end{equation}

\end{lemma}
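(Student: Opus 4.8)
The plan is to derive \eqref{eq:2s5c-asymptotic-bar} from the exact BAR \eqref{eq:2s5c-bar-appendix} in three stages: first produce an exact MGF identity, then Taylor-expand, then carry out the order bookkeeping. For the first stage I would substitute the bounded test function $f_\theta$ into \eqref{eq:2s5c-bar-appendix} and evaluate the three ingredients. The drift $-\E[\cala f_\theta(X)]$ contributes $\alpha_1\gamma_1(\theta_1)$ through the $\partial/\partial u_1$ term and $-\mu_k\zeta_k(\theta)$ through the $\partial/\partial v_k$ terms, whose service indicators generate the event-conditioned expectations. In the arrival and service Palm terms the fresh-clock increments factor out as $e^{\theta_1}\E[e^{-\gamma_1(\theta_1)T_{e,1}}]$ and $e^{-\theta_k+\theta_{k+1}\mathbbm1\{k<5\}}\E[e^{-\zeta_k(\theta)T_{s,k}}]$, which are identically $1$ by the defining relations \eqref{eq:general-gamma-def}; together with the steady-state flow balance that every class is served at rate $\alpha_1$ (so that the $\beta_k^{(r)}$ and conditional MGFs $\psi_k^{(r)}$ emerge), this reproduces the exact MGF identity of \cite{BravDaiMiya2023} (Lemma 2.2), now carrying the implicitly defined $\gamma_1,\zeta_k$ and $\psi^{(r)},\psi_k^{(r)}$ in place of the exponential $\phi^{(r)},\phi_k^{(r)}$.

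Next I would insert the Taylor expansions of Lemma~\ref{lem:2s5c-taylor}, writing $\gamma_1(\theta_1)=\gamma_1^\ast(\theta_1)+o(r^2\theta_1)+o(\theta_1^2)$ and $\zeta_k(\theta)=\zeta_k^\ast(\theta)+o(r^2|\theta|)+o(|\theta|^2)$. Since every prefactor ($\alpha_1$, $\beta_k^{(r)}\mu_k^{(r)}$, and the bounded MGFs) is $O(1)$, all remainders fall into the target error $o(r^2|\theta|)+o(|\theta|^2)$. In the generator line the algebraic identity $\bar\gamma_1(\theta_1)+\sum_{k=1}^5\bar\zeta_k(\theta)=\theta_1-\theta_1=0$ annihilates the first-order part of $\gamma_1^\ast+\sum_{k=1}^5\zeta_k^\ast$, leaving exactly $\alpha_1(\tilde\gamma_1(\theta_1)+\sum_{k=1}^5\tilde\zeta_k(\theta))\psi(\theta)$, which is the first line of \eqref{eq:2s5c-asymptotic-bar}.

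The crux is the order bookkeeping for the five boundary terms, which splits into two regimes according to priority. For the two low-priority classes the multi-scale heavy-traffic scaling gives $\beta_1^{(r)}=r$ and $\beta_4^{(r)}=r^2$, so the second-order pieces $\beta_1^{(r)}\mu_1^{(r)}\tilde\zeta_1$ and $\beta_4^{(r)}\mu_4^{(r)}\tilde\zeta_4$ are $O(r|\theta|^2)$ and $O(r^2|\theta|^2)$, hence $o(|\theta|^2)$, and are discarded purely by the smallness of $\beta$; only the first-order $\bar\zeta_1,\bar\zeta_4$ pieces survive. This route is forced because $\psi^{(r)}-\psi_1^{(r)}$ and $\psi^{(r)}-\psi_4^{(r)}$ do \emph{not} vanish for the low-priority classes. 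For the three high-priority classes the prefactors $\beta_2^{(r)},\beta_3^{(r)},\beta_5^{(r)}$ are $O(1)$, so smallness of $\beta$ no longer helps and I would instead invoke state space collapse: combining $\phi_k^{(r)}-\psi_k^{(r)}=o(1)$ (\cite{BravDaiMiya2023}, Lemma 2.12) with the exponential SSC $\phi^{(r)}(\theta)-\phi_k^{(r)}(\theta)\to0$ for $k\in\{2,3,5\}$ (\cite{BravDaiMiya2023}, Lemma 2.6, the MGF form of \eqref{eq:2s5c-ssc}) yields $\psi^{(r)}(\theta)-\psi_k^{(r)}(\theta)\to0$. Each $\tilde\zeta=O(|\theta|^2)$ contribution then carries this $o(1)$ factor and is $o(|\theta|^2)$, so again only the $\bar\zeta$ pieces remain, producing the combinations $\mu_2^{(r)}\bar\zeta_2-\mu_4^{(r)}\bar\zeta_4$, $\mu_3^{(r)}\bar\zeta_3-\mu_1^{(r)}\bar\zeta_1$, and $\mu_5^{(r)}\bar\zeta_5-\mu_3^{(r)}\bar\zeta_3$.

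The main obstacle is the first stage: reducing \eqref{eq:2s5c-bar-appendix} to the exact MGF identity requires handling the Palm expectations and the preemptive-resume service indicators carefully, so that the event rates collapse to $\alpha_1$ and the conditioning $\beta_k^{(r)},\psi_k^{(r)}$ appears with the correct signs and difference structure; this is the step that genuinely uses the machinery of \cite{BravDaiMiya2023}. A secondary point to watch is that the admissible error here is the weaker $o(r^2|\theta|)+o(|\theta|^2)$ rather than the exponential case's $o(|\theta|^2)$; this is exactly what the extra $o(r^2|\theta|)$ remainder in Lemma~\ref{lem:2s5c-taylor} forces, and I would verify term by term that no contribution of larger order is ever absorbed into it.
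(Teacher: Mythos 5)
Your proposal is correct and takes essentially the same route as the paper, which itself defers to \cite{BravDaiMiya2023}: substitute the test function $f_\theta$ with the implicitly defined $\gamma_1,\zeta_k$ of \eqref{eq:general-gamma-def} into the BAR \eqref{eq:2s5c-bar-appendix} so the Palm terms vanish and an exact MGF identity results, then apply the Taylor expansion of Lemma~\ref{lem:2s5c-taylor} and drop the second-order boundary pieces via $\beta_1^{(r)}=r$, $\beta_4^{(r)}=r^2$ for the low-priority classes and via SSC combined with the $\phi$--$\psi$ comparison (Lemmas 2.6 and 2.12 of \cite{BravDaiMiya2023}) for $k\in\{2,3,5\}$. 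Your bookkeeping, including the telescoping cancellation $\bar\gamma_1(\theta_1)+\sum_{k=1}^5\bar\zeta_k(\theta)=0$ in the generator line and the enlarged error budget $o(r^2|\theta|)+o(|\theta|^2)$ forced by the general-distribution Taylor remainder, matches the cited analysis.
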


Starting from this asymptotic BAR and applying a similar proof strategy as in Section~\ref{sec:exp-proof}, we can conclude the desired product-form limit under general interarrival and service time distributions. Therefore, we omit the proofs here.

\section{Proof of Uniform Moment Bound}
\label{sec:mom-proof-2s5c}

In this section, we prove the uniform moment bound condition in Proposition~\ref{prop:2s5c-ssc-low}, that $\exists r_0\in(0,1)$ such that
\begin{equation*}
    \sup_{r\in(0,r_0)}\E\Big[\big(rZ_1^{(r)}\big)^{2+\epsilon_0}\Big]<\infty\quad\text{and}\quad\sup_{r\in(0,r_0)}\E\Big[\big(r^2Z_4^{(r)}\big)^{2+\epsilon_0}\Big]<\infty. 
\end{equation*}

To highlight the difference between proving moment bounds in multi-class networks and single-class Jackson networks, we present the proof for exponential distributions and focus on integer-valued moment bounds. Notably, even under exponential distributions, the uniform moment bound is new, making our results valuable even in this simple context.

A key aspect of proving moment bounds for low-priority classes in multi-class networks is leveraging the moment bounds of high-priority classes. \cite{CaoDaiZhan2022} provides a sufficient condition for the existence of moment bounds for high-priority classes, when the MCN under SBP policy has fluid SSC, the high-priority classes of this network has uniform moment bounds.

The proof strategy follows a similar approach used to establish uniform moment bounds in open Jackson networks -- a double induction on the moment order and the station index. For each moment order, we first establish the moment bound for the lowest priority class at Station 1, the lighter-loaded station, and then proceed to Class 4 at Station 2, the lowest-priority class at the heavier-loaded station. We then continue to higher moments in this manner.

The base case, $n=0$, is trivial, allowing us to move directly to the inductive steps. Assume that the uniform moment bound has been established for moments up to $n-1$-th, i.e.,
\begin{equation}
    \sup_{r\in(0,r_0)}\E\Big[\big(rZ_1^{(r)}\big)^m\Big]\leq C_{r,m}\quad\text{and}\quad \sup_{r\in(0,r_0)}\E\Big[\big(r^2Z_4^{(r)}\big)^m\Big]\leq C_{r,m},
\end{equation}
we aim to prove the $n$-th moment bound for $Z_1^{(r)}$. To this end, we consider the following Lyapunov function,
\begin{equation*}
    f_1(z)=\frac{1}{n+1}r^{n-1}\Big(z_1 + \frac{m_3-m_5\frac{m_2}{m_4}}{m_1+m_3-m_5\frac{m_2}{m_4}}z_2 +\frac{m_3}{m_1+m_3-m_5\frac{m_2}{m_4}}z_3 +\frac{m_5}{m_1+m_3-m_5\frac{m_2}{m_4}}z_5\Big)^{n+1}.
\end{equation*}
This expression demonstrates that to prove the moment bound for the lowest-priority class at Station 1, we must also account for high-priority classes at both Station 1 and other stations, the involvement of which does not present an issue, as the uniform moment bounds for these high-priority classes have already been established. 

Applying generator to $f_1$, we have
\begin{align*}
    Gf_1(z)=&r^{n-1}\Big(z_1+ \frac{m_3-m_5\frac{m_2}{m_4}}{m_1+m_3-m_5\frac{m_2}{m_4}}z_2 +\frac{m_3}{m_1+m_3-m_5\frac{m_2}{m_4}}z_3+\frac{m_5}{m_1+m_3-m_5\frac{m_2}{m_4}}z_5\Big)^n\\
    &\Big[\alpha_1-\frac{1}{m_1+m_3-m_5\frac{m_2}{m_4}}\big(\mu_1^{(r)}m_1\mathbbm1\{z_1>0, z_3=z_5=0\}-\mu_2^{(r)}m_5\frac{m_2}{m_4}\mathbbm1\{z_2>0\}\\
    &+\mu_3^{(r)}m_3\mathbbm1\{z_3>0,z_5=0\}+(-\mu_4^{(r)}\mathbbm1\{z_4>0,z_2=0\}+\mu_5^{(r)}\mathbbm1\{z_5>0\})\big)m_5\Big]\\
    &+r^{n-1}\frac{1}{n+1}\sum_{k=2}^n\Bigg[\Big(z_1+ \frac{m_3-m_5\frac{m_2}{m_4}}{m_1+m_3-m_5\frac{m_2}{m_4}}z_2 +\frac{m_3}{m_1+m_3-m_5\frac{m_2}{m_4}}z_3+\frac{m_5}{m_1+m_3-m_5\frac{m_2}{m_4}}z_5\Big)^{n+1-k}\\
    &\Big(\alpha_1-\frac{1}{m_1+m_3-m_5\frac{m_2}{m_4}}\big(\mu_1^{(r)}m_1\mathbbm1\{z_1>0, z_3=z_5=0\}-\mu_2^{(r)}m_5\frac{m_2}{m_4}\mathbbm1\{z_2>0\}\\
    &+\mu_3^{(r)}m_3\mathbbm1\{z_3>0,z_5=0\}+(-\mu_4^{(r)}\mathbbm1\{z_4>0,z_2=0\}+\mu_5^{(r)}\mathbbm1\{z_5>0\})\big)m_5\Big)^k\binom{n+1}{k}\Bigg].
\end{align*}
We note that
\begin{align*}
    &\alpha_1-\frac{1}{m_1+m_3-m_5\frac{m_2}{m_4}}\big(\mu_1^{(r)}m_1\mathbbm1\{z_1>0, z_3=z_5=0\}-\mu_2^{(r)}m_5\frac{m_2}{m_4}\mathbbm1\{z_2>0\}\\
    &+\mu_3^{(r)}m_3\mathbbm1\{z_3>0,z_5=0\}+(-\mu_4^{(r)}\mathbbm1\{z_4>0,z_2=0\}+\mu_5^{(r)}\mathbbm1\{z_5>0\})\big)m_5\\
    &=\alpha_1-\frac{1}{m_1+m_3-m_5\frac{m_2}{m_4}}\Big(\mu_1^{(r)}m_1\mathbbm1\{z_1>0,z_3=z_5=0\}+\mu_3^{(r)}m_3\mathbbm1\{z_3>0,z_5=0\}+\mu_5^{(r)}m_5\mathbbm1\{z_5>0\}\Big)\\
    &+\frac{\frac{m_5}{m_4}}{m_1+m_3-m_5\frac{m_2}{m_4}}\Big(\mu_2^{(r)}m_2\mathbbm1\{z_2>0\}+\mu_4^{(r)}m_4\mathbbm1\{z_4>0,z_2=0\}\Big)\\
    &=\alpha_1-\frac{1}{m_1+m_3-m_5\frac{m_2}{m_4}}\frac{1}{1-r}\Big(1-\mathbbm1\{z_1=z_3=z_5=0\}\Big)+\frac{\frac{m_5}{m_4}}{m_1+m_3-m_5\frac{m_2}{m_4}}\frac{1}{1-r^2}\Big(1-\mathbbm1\{z_4=z_2=0\}\Big)\\
    &\leq \alpha_1-\frac{1}{m_1+m_3-m_5\frac{m_2}{m_4}}\Big(\frac{1}{1-r}-\frac{m_5}{m_4}\frac{1}{1-r^2}\Big)+\frac{1}{m_1+m_3-m_5\frac{m_2}{m_4}}\frac{1}{1-r}\mathbbm1\{z_1=z_3=z_5=0\}\\
    &=\alpha_1-\frac{1-\frac{m_5^{(r)}}{m_4^{(r)}}}{m_1^{(r)}+m_3^{(r)}-m_5^{(r)}\frac{m_2^{(r)}}{m_4^{(r)}}}+\frac{1}{m_1+m_3-m_5\frac{m_2}{m_4}}\frac{1}{1-r}\mathbbm1\{z_1=z_3=z_5=0\}\\
    &\leq-\frac{r}{1-r}+\frac{1}{m_1+m_3-m_5\frac{m_2}{m_4}}\frac{1}{1-r}\mathbbm1\{z_1=z_3=z_5=0\}.
\end{align*}
Therefore, together with expectations, we have
\begin{align*}
    0&=\E\Big[Gf_1(Z^{(r)})\Big]\\
    &\leq -\frac{r^n}{1-r}\E\Big[\Big(Z_1^{(r)} + \frac{m_3-m_5\frac{m_2}{m_4}}{m_1+m_3-m_5\frac{m_2}{m_4}}Z_2^{(r)} +\frac{m_3}{m_1+m_3-m_5\frac{m_2}{m_4}}Z_3^{(r)} +\frac{m_5}{m_1+m_3-m_5\frac{m_2}{m_4}}Z_5^{(r)}\Big)^n\Big]\\
    &+\frac{r^{n-1}}{1-r}\E\Big[\Big(\frac{m_3-m_5\frac{m_2}{m_4}}{m_1+m_3-m_5\frac{m_2}{m_4}}Z_2^{(r)}\Big)^n\Big] + D_{1,n-1},
\end{align*}
where the remaining lower-order terms can be controlled by induction hypothesis. From above inequality, we can conclude the desired moment bound, 
\begin{equation*}
        \E\Big[(rZ_1^{(r)})^n\Big]<\infty.
    \end{equation*}

Now that the $n$-th moment for Class 1 has been proven, we can make use of this bound in the proof of moment bound for Class 4, the low-opriority class at the heavier-loaded Station 2. 
In this case, we consider the following Lyapunov function
\begin{equation*}
    f_2(z)=\frac{1}{n+1}r^{2(n-1)}\Big(\frac{m_2+m_4}{m_4}z_1 + \frac{m_2+m_4}{m_4}z_2 +z_3 +z_4\Big)^{n+1}.
\end{equation*}

Applying generator to $f_2$, we have
\begin{align*}
    Gf_2(z)=&r^{2(n-1)}\Big(\frac{m_2+m_4}{m_4}z_1 + \frac{m_2+m_4}{m_4}z_2 +z_3 +z_4\Big)^n\\
    &\Big[\alpha_1\frac{m_2+m_4}{m_4}-\mu_2^{(r)}\mathbbm1\{z_2>0\}\frac{m_2}{m_4}-\mu_4^{(r)}\mathbbm1\{z_4>0,z_2=0\}\Big]+\text{Residuals}
\end{align*}
It is easy to see that
\begin{align*}
    &\alpha_1\frac{m_2+m_4}{m_4}-\mu_2^{(r)}\mathbbm1\{z_2>0\}\frac{m_2}{m_4}-\mu_4^{(r)}\mathbbm1\{z_4>0,z_2=0\}\\
    &=\frac{1}{m_4}\Big(1-\frac{1}{1-r^2}(\mathbbm1\{z_2>0\}+\mathbbm1\{z_4>0,z_2=0\})\Big)\\
    &=\frac{1}{m_4}\Big(1-\frac{1}{1-r^2}(1-\mathbbm1\{z_4=z_2=0\})\Big)=\frac{1}{m_4}\Big(-\frac{r^2}{1-r^2}+\mathbbm1\{z_4=z_2=0\}\Big).
\end{align*}

Hence, with expectation, we have
\begin{align*}
    0=\E\Big[Gf_2(Z^{(r)}\Big]
    &\leq -\frac{1}{m_4}\frac{r^{2n}}{1-r^2}\E\Big[\Big(\frac{m_2+m_4}{m_4}Z_1^{(r)} + \frac{m_2+m_4}{m_4}Z_2^{(r)} +Z_3^{(r)} +Z_4^{(r)}\Big)^n\Big] \\
    &+ \frac{1}{m_4}r^{2(n-1)}\E\Big[\Big(\frac{m_2+m_4}{m_4}Z_1^{(r)}  +Z_3^{(r)} \Big)^n\mathbbm1\{Z_4^{(r)}=Z_2^{(r)}=0\}\Big] + D_{2,n-1}.
\end{align*}

    To bound the cross term, we first note that when $n\geq2$, it is clear that $2(n-1)\geq n$. As such, in this case, we directly obtain
    \begin{align*}
        &\E\Big[r^{2(n-1)}\Big(\frac{m_2+m_4}{m_2}Z_1^{(r)}+Z_3^{(r)}\Big)^n\cdot\mathbbm{1}\{Z_2^{(r)}=0,Z_4^{(r)}=0\}\Big]\\
        &\leq\E\Big[r^n\Big(\frac{m_2+m_4}{m_2}Z_1^{(r)}+Z_3^{(r)}\Big)^n\cdot\mathbbm{1}\{Z_2^{(r)}=0,Z_4^{(r)}=0\}\Big]\\
        &\leq \E\Big[r^n\Big(\frac{m_2+m_4}{m_2}Z_1^{(r)}+Z_3^{(r)}\Big)^n\Big]<\infty,
    \end{align*}
    for we have proven the desired moment bound for $(rZ_1^{(r)})^n$.
    Otherwise, when $n=1$, we make use of the Cauchy-Schwarz inequality and obtain
    \begin{align*}
        &\E\Big[(\frac{m_2+m_4}{m_2}Z_1^{(r)}+Z_3^{(r)})\cdot\mathbbm{1}\{Z_2^{(r)}=0,Z_4^{(r)}=0\}\Big]\\
        &\leq\sqrt{\E[(\frac{m_2+m_4}{m_2}Z_1^{(r)}+Z_3^{(r)})^2]\E[\mathbbm{1}\{Z_2^{(r)}=0,Z_4^{(r)}=0\}]}\\
        &\leq \frac{m_2+m_4}{m_2}\sqrt{\E[(r(Z_1^{(r)}+Z_3^{(r)}))^2]}.
    \end{align*}
    
Following a similar approach, we have completed the proof 
   \begin{equation*}
       \E_\pi[(r^2Z_4^{(r)})^n]<\infty.
   \end{equation*}

\section{Conclusion}
\label{sec:conclude}

In this companion paper, we have demonstrated the results from \cite{DaiHuo2024} in a specific two-station, five-class reentrant line setup. By focusing on this concrete example, we prove the asymptotic product-form limit and uniform moment bound, highlighting key technical contributions while minimizing complex notation. The numerical experiments, the same as those in~\cite{DaiHuo2024}, confirm the theory's applicability in this context. Our findings underscore the robustness of the product-form limit under multi-scale heavy traffic and lay the groundwork for future exploration of more complex systems and refined performance evaluations. We refer readers who are interested in the general proofs and broader context to the full paper~\cite{DaiHuo2024}.

\bibliography{citation} 

\def\cprime{$'$} \def\cprime{$'$} \def\cprime{$'$} \def\cprime{$'$} \def\cprime{$'$} \def\cprime{$'$} \def\cprime{$'$}
\begin{thebibliography}{7}
\expandafter\ifx\csname natexlab\endcsname\relax\def\natexlab#1{#1}\fi
\expandafter\ifx\csname url\endcsname\relax
  \def\url#1{\texttt{#1}}\fi
\expandafter\ifx\csname urlprefix\endcsname\relax\def\urlprefix{URL }\fi
\providecommand{\eprint}[2][]{\url{#2}}

\bibitem[{Braverman et~al.(2017)Braverman, Dai and Miyazawa}]{BravDaiMiya2017}
\textsc{Braverman, A.}, \textsc{Dai, J.~G.} and \textsc{Miyazawa, M.} (2017).
\newblock Heavy traffic approximation for the stationary distribution of a generalized {Jackson} network: the {BAR} approach.
\newblock \textit{Stochastic Systems}, \textbf{7} 143--196.
\newblock \urlprefix\url{http://projecteuclid.org/euclid.ssy/1495785619}.

\bibitem[{Braverman et~al.(2024)Braverman, Dai and Miyazawa}]{BravDaiMiya2023}
\textsc{Braverman, A.}, \textsc{Dai, J.~G.} and \textsc{Miyazawa, M.} (2024).
\newblock The bar approach for multiclass queueing networks with sbp service policies.
\newblock \eprint{2302.05791}.

\bibitem[{Cao et~al.(2022)Cao, Dai and Zhang}]{CaoDaiZhan2022}
\textsc{Cao, C.}, \textsc{Dai, J.~G.} and \textsc{Zhang, X.} (2022).
\newblock Steady-state state space collapse in muliclass queueing networks.
\newblock \textit{Queueing Systems: Theory and Applications}, \textbf{102} 87--122.
\newblock \urlprefix\url{https://doi.org/10.1007/s11134-022-09864-6}.

\bibitem[{Dai et~al.(2023)Dai, Glynn and Xu}]{DaiGlynXu2023}
\textsc{Dai, J.~G.}, \textsc{Glynn, P.} and \textsc{Xu, Y.} (2023).
\newblock Asymptotic product-form steady-state for generalized jackson networks in multi-scale heavy traffic.
\newblock \eprint{2304.01499}.

\bibitem[{Dai and Huo(2024)}]{DaiHuo2024}
\textsc{Dai, J.~G.} and \textsc{Huo, D.} (2024).
\newblock Asymptotic product-form steady-state for multiclass queueing networks with sbp service policies in multi-scale heavy traffic.
\newblock \eprint{2403.04090}.

\bibitem[{Dai and Vande~Vate(2000)}]{DaiVand2000}
\textsc{Dai, J.~G.} and \textsc{Vande~Vate, J.~H.} (2000).
\newblock The stability of two-station multitype fluid networks.
\newblock \textit{Oper. Res.}, \textbf{48} 721--744.
\newblock \urlprefix\url{https://doi.org/10.1287/opre.48.5.721.12408}.

\bibitem[{Harrison(1988)}]{Harr1988}
\textsc{Harrison, J.~M.} (1988).
\newblock Brownian models of queueing networks with heterogeneous customer populations.
\newblock In \textit{Stochastic Differential Systems, Stochastic Control Theory and Applications} (W.~Fleming and P.-L. Lions, eds.). Springer New York, New York, NY, 147--186.

\end{thebibliography}

\end{document}